\newcommand{\remove}[1]{}
\begin{document}
\newtheorem{theorem}{Theorem}[section]
\newtheorem{lemma}[theorem]{Lemma}
\newtheorem{definition}[theorem]{Definition}
\newtheorem{remark}[theorem]{Remark}
\newtheorem{conjecture}[theorem]{Conjecture}
\newtheorem{proposition}[theorem]{Proposition}
\newtheorem{algorithm}[theorem]{Algorithm}
\newtheorem{corollary}[theorem]{Corollary}
\newtheorem{observation}[theorem]{Observation}
\newtheorem{problem}[theorem]{Open Problem}
\newcommand{\noin}{\noindent}
\newcommand{\ellb}{\bar{\ell}}
\newcommand{\ind}{\indent}
\newcommand{\al}{\alpha}
\newcommand{\om}{\omega}
\newcommand{\pp}{\mathcal P}
\newcommand{\ppp}{\mathfrak P}
\newcommand{\R}{{\mathbb R}}
\newcommand{\N}{{\mathbb N}}
\newcommand{\Z}{{\mathbb Z}}
\newcommand\eps{\varepsilon}
\newcommand{\E}{\mathbb E}
\newcommand{\Var}{\mathbb{ V}\mathrm{ar}}
\newcommand{\Prob}{\mathbb{P}}
\newcommand{\pl}{\textrm{C}}
\newcommand{\dang}{\textrm{dang}}
\renewcommand{\labelenumi}{(\roman{enumi})}
\newcommand{\bc}{\bar c}
\newcommand{\cal}[1]{\mathcal{#1}}
\newcommand{\G}{{\cal G}}
\renewcommand{\P}{{\cal P}}

\newcommand{\bel}[1]{\be\lab{#1}}
\newcommand{\ee}{\end{equation}}
\newcommand{\be}{\begin{equation}}
 \newcommand\eqn[1]{(\ref{#1})}

\title{Vertex-pursuit in random directed acyclic graphs}

\author{Anthony Bonato}
\address{Department of Mathematics, Ryerson University, Toronto, ON, Canada, M5B 2K3}
\email{\tt abonato@ryerson.ca}

\author{Dieter Mitsche}
\address{Department of Mathematics, Ryerson University, Toronto, ON, Canada, M5B 2K3}
\email{\texttt{dmitsche@ryerson.ca}}

\author{Pawe\l{} Pra\l{}at}
\address{Department of Mathematics, Ryerson University, Toronto, ON, Canada, M5B 2K3}
\email{\texttt{pralat@ryerson.ca}}

\thanks{The authors gratefully acknowledge support from NSERC, Mprime, and Ryerson University.}

\keywords{vertex-pursuit games, directed acyclic graphs, Seepage, regular graphs, power law graphs}
\subjclass{05C80, 05C57,  94C15}

\maketitle

\begin{abstract}
We examine a dynamic model for the disruption of information flow in hierarchical social networks by considering the vertex-pursuit game Seepage played in directed acyclic graphs (DAGs). In Seepage, agents attempt to block the movement of an intruder who moves downward from the source node to a sink. The minimum number of such agents required to block the intruder is called the green number. We propose a generalized stochastic model for DAGs with given expected total degree sequence. Seepage and the green number is analyzed in stochastic DAGs in both the cases of a regular and power law degree sequence. For each such sequence, we give asymptotic bounds (and in certain instances, precise values) for the green number.
\end{abstract}

\section{Introduction}\label{sec:intro}

The on-line social network Twitter is a well known example of a complex real-world network with over $300$ million users. The topology of Twitter network is highly directed, with each user following another (with no requirement of reciprocity). By focusing on a popular user as a source (such as Lady Gaga or Justin Bieber, each of whom have over 11 million followers~\cite{twitter}), we may view the followers of the user as a certain large-scale \emph{hierarchical social network}. In such networks, users are organized on ranked levels below the source, with links (and as such, information) flowing from the source downwards to sinks. We may view hierarchical social networks as \emph{directed acyclic graphs}, or \emph{DAGs} for short. Hierarchical social networks appear in a wide range of contexts in real-world networks, ranging from terrorist cells to the social organization in companies; see, for example~\cite{almen,farley2,gupte,ikeda,lopez}.

In hierarchical social networks, information flows downwards from the source to sinks. Disrupting the flow of information may correspond to halting the spread of news or gossip in on-line social network, or intercepting a message sent in a terrorist network. How do we disrupt this flow of information while minimizing the resources used? We consider a simple model in the form of a vertex-pursuit game called Seepage introduced in \cite{CFFMN}. Seepage is motivated by the 1973 eruption of the Eldfell volcano in Iceland. In order to protect the harbour, the inhabitants poured water on the lava in order to solidify it and thus, halt its progress. The game has two players, the \emph{sludge} and a set of \emph{greens} (note that one player controls all the greens), a DAG with one source (corresponding to the top of the volcano) and many sinks (representing the lake). The players take turns, with the sludge going first by contaminating the top node (source). Then it is the greens' turn, and they choose some non-protected, non-contaminated nodes to protect. On subsequent rounds the sludge moves a non-protected node that is adjacent (that is, downhill) to the node the sludge is currently occupying and contaminates it; note that the sludge is located at a single node in each turn. The greens, on their turn, proceed as before; that is, choose some non-protected, non-contaminated nodes to protect. Once protected or contaminated, a node stays in that state to the end of the game. The sludge wins if some sink is contaminated; otherwise the greens win, that is, if they erect a cutset of nodes which separates the contaminated nodes from the sinks. The name ``Seepage'' is used because the rate of contamination is slow. The game is related to vertex-pursuit games such as Cops and Robbers (for an introduction and further reading on such games, see~\cite{AB1}), although the greens in our case need not move to neighbouring nodes. For an example, see the DAG in Figure~\ref{seepageex1}. (We omit orientations of directed edges in the figure, and assume all edges point from higher nodes to lower ones.)
\begin{figure} [h]
\begin{center}
\epsfig{figure=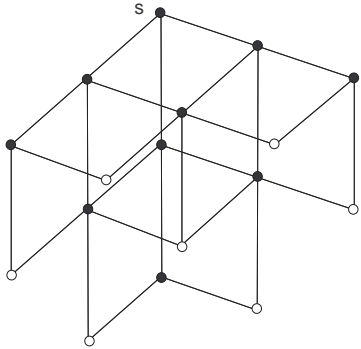,width=2in,height=2in}
\caption{A DAG where $2$ greens are needed to win. The white nodes are the sinks.}\label{seepageex1}
\end{center}
\end{figure}

To obtain the results in this paper, a number of different winning strategies are employed by the two players. In some cases one of the two players can play arbitrarily (at least up to some point), whereas in other cases the optimal strategy is simply a ``greedy'' one (for example, when the greens protect neighbours as close as possible to the current position of the sludge). In some other cases, much more sophisticated strategies have to be applied.

To date the only analysis of Seepage was in \cite{CFFMN}, which presented results for DAGs. Seepage may be extended to certain directed graphs with cycles, although we do not consider this variation here (see also Section~\ref{directions}). In \cite{CFFMN}, a characterization was given of directed trees where one green has a winning strategy, and bounds were given on the number of greens needed to win in truncated products of paths. See also Chapter~9 of \cite{AB1}.

Seepage displays some interesting similarities to an approach used in mathematical counterterrorism, where cut sets in partially ordered sets (which are just a special kind of DAG) are used to model the disruption of terrorist cells. As described in Farley~\cite{farley2,farley1}, the maximal elements of the poset are viewed as the leaders of the terrorist organization, who submit plans down via the edges to the nodes at the bottom (the foot soldiers or minimal nodes). Only one messenger needs to receive the message for the plan to be executed. Farley considered finding minimum-order sets of elements in the poset, which when deleted, disconnect the minimal elements from the maximal one (that is, find a \emph{minimum cut}). We were struck by the similarities in the underlying approaches in~\cite{CFFMN} and~\cite{farley2,farley1}; for example, in Seepage the greens are trying to prevent the sludge from moving to the sinks by blocking nodes. The main difference is that Seepage is ``dynamic'' (that is, the greens can move, or choose new sets of nodes each time-step), while the min-cut-set approach is ``static'' (that is, find a cutset in one time-step). Seepage is perhaps a more realistic model of counterterrorism, as the agents do not necessarily act all at once but over time. However, in both approaches deterministic graphs are used.

We note that a stochastic model was presented for so-called \emph{network interdiction} in~\cite{interdiction}, where the task of the interdictor is to find a set of edges in a weighted network such that the removal of those edges would maximally increase the cost to an evader of traveling on a path through the network. A stochastic model for complex DAGs was given in~\cite{chayes}. For more on models of on-line social networks and other complex networks, see~\cite{bonato}.

Our goal in the present article is to analyze Seepage and the green number when played on a random DAG as a model of disrupting a given hierarchical social network. We focus on mathematical results, and give a precise formulation of our random DAG model in Section~\ref{sec:definitions}. Our model includes as a parameter the total degree distribution of nodes in the DAG. This has some similarities to the $G(\mathbf{w})$ model of random graphs with expected degree sequences (see~\cite{clbook}) or the pairing model (see~\cite{wormald}). We study two cases: regular DAGs (where we would expect each level of the DAG to have nodes with about the same out-degree), and power law DAGs (where the degree distribution is heavy tailed, with many more low degree nodes but a few which have a high degree). Rigorous results are presented for regular DAGs in Theorem~\ref{thm:main_d-reg}, and for power law DAGs in Theorem~\ref{thm:main_power-law}. An overview of the main results is given in Section~\ref{mainr}.

Throughout, $G$ will represent a finite DAG. For background on graph theory, the reader is directed to~\cite{diestel,west}. The total degree of a vertex is the sum of its in- and out-degrees. Additional background on Seepage and other vertex-pursuit games may be found in~\cite{AB1}.

\section{Definitions}\label{sec:definitions}

We denote the natural numbers (including $0$) by $\mathbb{N}$, and the positive integers and real numbers by $\mathbb{N}^+$ and $\mathbb{R}^+$, respectively. For an event $A$ on a probability space, we let $\mathbb{P}(A)$ denote the probability of $A$. Given a random variable $X$, we let $\mathbb{E}(X)$ and $\mathbb{V}\mathrm{ar}(X)$ be the expectation and the variance of $X$, respectively.

We now give a formal definition of our vertex-pursuit game.  Fix $v \in V(G)$ a node of $G$. We will call $v$ the \emph{source}. For $i \in \N $ let
$$
L_i = L_i(G,v) = \{ u \in V(G) : \mathrm{dist}(u,v) = i \},
$$
where $\mathrm{dist}(u,v)$ is the distance between $u$ and $v$ in $G.$
In particular, $L_0 = \{v \}$. For a given $j \in \N^+$ and $c \in \R^+$, let $\G(G,v,j,c)$ be the game played on graph $G$ with the source $v$ and the \emph{sinks} $L_j$. The game proceeds over a sequence of discrete time-steps. Exactly
$$
c_t=\lfloor ct \rfloor- \lfloor c(t-1) \rfloor
$$
new nodes are protected at time-step $t$. (In particular, at most $ct$ nodes are protected by the time $t$.) Note that if $c$ is an integer, then exactly $c$ nodes are protected at each time-step, so this is a natural generalization of Seepage. To avoid trivialities, we assume that $L_j \neq \emptyset$.

The \emph{sludge} starts the game on the node $v_1=v$. The second player, the \emph{greens}, can protect $c_1 = \lfloor c \rfloor$ nodes of $G \setminus \{v\}$. Once nodes are protected they will stay protected to the end of the game. At time $t \ge 2$, the sludge makes the first move by sliding along a directed edge from $v_{t-1}$ to $v_t$, which is an out-neighbour of $v_{t-1}$. After that the greens have a chance to protect another $c_t$ nodes. Since the graph is finite and acyclic, the sludge will be forced to stop moving, and so the game will eventually terminate. If he reaches any node of $L_j$, then the sludge wins; otherwise, the greens win.

If $c = \Delta(G)$ (the maximum out-degree of $G$), then the game $\G(G,v,j,c)$ can be easily won by the greens by protecting of all neighbours of the source. Therefore, the following graph parameter, \emph{the green number}, is well defined:
$$
g_j(G,v) = \inf \{ c \in \R^+ : \G(G,v,j,c) \text{ is won by the greens} \}.
$$
It is clear that for any $j \in \N_+$ we have $g_{j+1}(G,v) \le g_j(G,v)$.

\subsection{Random DAG model}

\remove{

\dieter{in order to please the referee, I propose the following alternative. You can ignore it, if you are safe and continue with your old version which I still let below}

We formally define the random DAG model with the following existence theorem. \dieter{i don't know how to formulate it better}
\begin{theorem}
Let $n \in \N^+$ and let $$\mathbf{w}=(w_1, w_2, \ldots)$$ an infinite sequence of non-negative integers (the $w_i$ might depend on $n$). Let $L_0 = \{v\}$ and suppose inductively that
$L_k = \{ v_{d_{k-1}+1}, v_{d_{k-1}+2}, \ldots, v_{d_k} \},$
where $d_k = \sum_{i=0}^k |L_i|$ and where the desired degree distribution of $L_k$ is $(w_{d_{k-1}+1}, w_{d_{k-1}+2}, \ldots, w_{d_k})$.  Let $S$ be a new set of nodes of cardinality $n$. Each node $v_i \in L_k$ then generates $\max \{ w_i - \deg^-(v_i), 0\}$ random directed edges from $v_i$ to $S$ (that is, the destination of each edge is chosen uniformly at random from $S$). The set of nodes of $S$ chosen at least once forms a new layer $L_{k+1}$.
\end{theorem}

\textbf{(Anthony: OK, there are some big issues with this. First, theorems prove something, and I don't see a conclusion above. Second, where is the ``proof''? Third, the theorem is not referenced in the text. I am concerned with this approach; I personally don't think the referee knows what they are talking about.)}

Note that it can happen that some node $v_i \in L_k$ has an in-degree $\deg^-(v_i)$ already larger than $w_i$, and so there is no hope for the total degree of $w_i$. If this is not the case, then the requirement can be easily fulfilled. As a result, for the analysis of the green number, \dieter{added} the desired degree distribution will serve as a lower bound for the distribution we obtain during the process. Also, it can happen that two parallel edges are created during this process. However, for sparse random graphs we are going to investigate in this paper, this is rare and excluding them, by slightly modifying the process, would not affect any of our results on the green number. \dieter{added}

\dieter{END SUGGESTION, HERE OLD VERSION CONTINUES}

}

There are two parameters of the model: $n \in \N^+$ and an infinite sequence $$\mathbf{w}=(w_1, w_2, \ldots)$$ of non-negative integers. Note that the $w_i$'s may be functions of $n$. The first layer (that is, the source) consists of one node: $L_0 = \{v\}$. The next layers are recursively defined. For the inductive hypothesis, suppose that all layers up to and including the layer $k$ are created, and let us label all nodes of those layers. In particular,
$$
L_k = \{ v_{d_{k-1}+1}, v_{d_{k-1}+2}, \ldots, v_{d_k} \},
$$
where $d_k = \sum_{i=0}^k |L_i|.$ We would like the nodes of $L_k$ to have a total degree with the following distribution $(w_{d_{k-1}+1}, w_{d_{k-1}+2}, \ldots, w_{d_k})$. However, it can happen that some node $v_i \in L_k$ has an in-degree $\deg^-(v_i)$ already larger than $w_i$, and so there is no hope for the total degree of $w_i$. If this is not the case, then the requirement can be easily fulfilled. As a result, $\mathbf{w}$, the desired degree distribution, will serve as a (deterministic) lower bound for the actual degree distribution we obtain during the (random) process.

Let $S$ be a new set of nodes of cardinality $n$. All directed edges that are created at this time-step will be from the layer $L_k$ to a random subset of $S$ that will form a new layer $L_{k+1}$. Each node $v_i \in L_k$ generates $\max \{ w_i - \deg^-(v_i), 0\}$ random directed edges from $v_i$ to $S$. Therefore, we generate $$e_k = \sum_{v_i \in L_k} \max \{ w_i - \deg^-(v_i), 0\}$$ random edges at this time-step. The destination of each edge is chosen uniformly at random from $S$. All edges are generated independently, and so we perform $e_k$ independent experiments. The set of nodes of $S$ that were chosen at least once forms a new layer $L_{k+1}$. Note that it can happen that two parallel edges are created during this process. However, this is a rare situation for sparse random graphs we are going to investigate in this paper. Hence, our results on the green number will also hold for a slightly modified process which excludes parallel edges.

\section{Main results}\label{mainr}

In this paper, we focus on two specific sequences: regular and power law. We will describe them both and state the main results in the next two subsections. We consider asymptotic properties of the model as $n\rightarrow \infty$. We say that an event in a probability space holds \emph{asymptotically almost surely} (\emph{a.a.s.}) if its probability tends to one as $n$ goes to infinity.

\subsection{Random regular DAGs}

We consider a constant sequence; that is, for $i \in \N^+$ we set $w_i = d$, where $d \ge 3$ is a constant. In this case, we refer to the stochastic model as \emph{random $d$-regular DAGs}. Since $w_i=d$, observe that $|L_j| \leq d(d-1)^{j-1}$ (deterministically) for any $j$, since at most $d(d-1)^{j-1}$ random edges are generated when $L_j$ is created. We will write $g_j$ for $g_j(G,v)$ since the graph $G$ is understood to be a $d$-regular random graph, and $L_0=\{v\}=\{v_1 \}$.

\begin{theorem}\label{thm:main_d-reg}
Let $\omega=\omega(n)$ be any function that grows (arbitrarily slowly) as $n$ tends to infinity. For the random $d$-regular DAGs, we have the following.
\begin{itemize}
\item [(i)] A.a.s.\ $g_1 = d$.
\item [(ii)] If $2 \le j = O(1)$, then a.a.s.\
$$
g_j = d-2+\frac {1}{j} .
$$
\item [(iii)] If $\omega \le j \le \log_{d-1} n - \omega \log \log n$, then a.a.s.\
$$
g_j = d-2.
$$
\item [(iv)] If $\log_{d-1} n - \omega \log \log n \le j \le \log_{d-1}n- \frac{5}{2} s \log_2 \log n + \log_{d-1} \log n - O(1)$ for some $s \in \N^+$, then a.a.s.\
$$
d-2-\frac {1}{s} \le g_j \le d-2.
$$
\item [(v)] Let $s \in \N^+$, $s \ge 4$. There exists a constant $C_s >0 $ such that if  $j \ge \log_{d-1}n + C_s$, then a.a.s.\
$$
g_j \le d-2-\frac {1}{s}.
$$
\end{itemize}
\end{theorem}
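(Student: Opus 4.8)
The statement is an upper bound on the green number, so the task is to exhibit a strategy for the greens and show that it wins the game $\G(G,v,j,c)$ asymptotically almost surely at rate $c=d-2-\frac1s$; since $g_j=\inf\{c:\text{greens win}\}$, this yields $g_j\le d-2-\frac1s$ directly, and by the monotonicity of the game in $c$ it suffices to treat this single value of $c$. The first step is to record the structural picture of the random $d$-regular DAG underlying parts (iii) and (iv): for $i$ up to roughly $\log_{d-1}n$ the graph is whp tree-like, each non-source vertex having in-degree $1$ and out-degree $d-1$, while for $i$ beyond the saturation threshold $\log_{d-1}n$ the layers have size $\Theta(n)$, in-degrees become Poisson-like with a constant mean, and a \emph{constant fraction} of vertices acquire reduced out-degree (many have out-degree $\le 1$, and a positive density are dead ends). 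The key qualitative point, and the reason the bound can beat the tree value $d-2$ from (iii), is that the improvement is paid for entirely in the $\ge C_s$ \emph{saturated} layers that exist only once $j>\log_{d-1}n$; this is exactly why the hypothesis is $j\ge\log_{d-1}n+C_s$ rather than $j$ merely large.

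The plan for the strategy itself is to have the greens track the sludge and, instead of trying to trap it in the tree-like region (which is known to require rate essentially $d-2$), pre-protect a \emph{small forward cut} that separates the sludge from $L_j$ inside the saturated region. Concretely, when the sludge reaches a vertex $u$ at some layer $j-r$ with $r=\Theta(C_s)$, I would have the greens protect a minimum-size set of vertices in $L_{j-r+1}\cup\cdots\cup L_{j-1}$ blocking every directed path from $u$ to $L_j$. The crucial structural lemma to establish is that, a.a.s. and uniformly over the vertex $u$ the sludge might occupy, this forward cut has size bounded by a constant $g(r)=g(r,s)$: this is where the reduced out-degrees, the abundance of dead ends, and the merging of out-neighbourhoods in the saturated layers are exploited, since each of these effects shrinks the forward reachable set and creates bottlenecks that a tree-like region does not possess. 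During the earlier tree-like descent the greens need only enough activity to keep the sludge's options from blowing up before it enters the saturated window; the bulk of their budget is banked (in the sense of being spent on relevant forward vertices) for the endgame cut.

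The heart of the quantitative argument is the budget balance. By the rate definition, the total number of vertices protected by the time the sludge is at layer $j-r$ is about $c\cdot(j-r)$, and over the final $r$ steps a further $\approx c\,r$ protections become available; the greens win provided this pool exceeds the forward-cut size $g(r,s)$. One then chooses $C_s$ (hence the available $r$) large enough that $c\,r\ge g(r,s)$ holds with the reduced rate $c=d-2-\frac1s$, using that in the saturated region the effective branching is strictly below $d-1$ so that protecting the forward cut costs less per layer than the baseline $d-2$. Tracking where the saving of exactly $\frac1s$ can be extracted from the saturated layers is what forces the lower cut-off $s\ge 4$: the reduced branching of the saturated region can only lower the required rate by a bounded amount, and $\frac14$ is the largest improvement this mechanism can guarantee.

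The main obstacle, and the place where almost all of the technical work will sit, is the simultaneous control of the \emph{adaptive} sludge and the \emph{random} degree sequence. Because the sludge chooses its route to maximise its forward options, one cannot analyse a single trajectory; instead I expect to need a first-moment/union-bound argument over all source-to-$L_j$ routes the sludge could follow, showing that a.a.s. \emph{every} vertex reachable at layer $j-r$ has a forward cut the greens can afford, and that no route lets the sludge reach a sink before the cut is completed. This requires the structural estimates on layer sizes, in-/out-degree distributions, and the size of forward cuts in the saturated region to hold uniformly and with failure probability small enough to survive a union bound over the (super-polynomially many) candidate trajectories, which is delicate precisely in the transition window around $\log_{d-1}n$ where the degree sequence is least concentrated; establishing the needed concentration there, and coupling it cleanly to the online cut-building strategy, is the crux of the proof.
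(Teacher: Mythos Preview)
Your plan diverges from the paper's proof in a way that creates a real gap. You propose that the greens build a \emph{forward cut}: once the sludge reaches some $u\in L_{j-r}$ in the saturated region, they protect a minimum $u$--$L_j$ separator using the budget accumulated over the final $r$ rounds. The difficulty is the budget arithmetic. Over those last $r$ rounds the greens get only about $(d-2-\tfrac1s)r$ protections, while the forward reachable set from $u$ has size up to $(d-1)^r$; merging and dead ends in the saturated layers reduce the \emph{average} out-degree below $d-1$, but the sludge is adversarial and will steer toward a vertex whose forward fan-out is atypically large, so you would need to show that \emph{every} reachable $u$ admits a cut of size at most $cr$, with failure probability beating the union bound over $\Theta(n)$ such vertices. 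Nothing in your sketch establishes this, and it is not clear it is even true. Your ``banking'' idea does not rescue this: if during the tree-like descent the greens divert protections away from the sludge's immediate neighbours, the sludge gains extra branching choices and its eventual landing vertex in $L_{j-r}$ becomes even less predictable, so the banked protections cannot be placed usefully in advance.

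The paper's mechanism is different and sidesteps this issue entirely. The greens play \emph{greedily} throughout, always protecting out-neighbours of the sludge's current vertex. The leverage in the saturated region is not a small forward cut but the fact that many vertices there have in-degree $\ge 2$ and hence out-degree $\le d-2$; whenever the sludge lands on such a vertex at a step where $c_t=d-2$, it is immediately trapped. So to survive, the sludge must find a route along which vertices have in-degree $1$ at all but the special $1$-in-$s$ steps. The paper formalises the family of admissible routes via ``sludge-cuts'', counts them (there are at most $(d-1)^{4(d-1)^{-s+1}\ell}$ of them), and shows that for each fixed cut the probability that a whole layer of $\Theta(\ell)$ vertices all have in-degree $1$ is at most $e^{-c_0(1-\eps)\ell/(d-1)}$. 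The inequality $(d-1)^{4(d-1)^{-s+2}}e^{-c_0}<1$, which holds for all $d\ge 3$ once $s\ge 4$, is what makes the union bound succeed and is the actual origin of the threshold $s\ge 4$; it is not, as you suggest, a generic ``$\tfrac14$ is the most one can save'' phenomenon.
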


The whole Section~\ref{sec:d-reg} is devoted to prove this theorem. Theorem~\ref{thm:main_d-reg} tells us that the green number is slightly bigger than $d-2$ if the sinks are located near the source, and then it is $d-2$ for a large interval of $j$. Later, it might decrease slightly since an increasing number of vertices have already in-degree $2$ or more, but only for large $j$ (part (v)) we can prove  better upper bounds than $d-2.$ One interpretation of this fact is that the resources needed to disrupt the flow of information is in a typical regular DAG is (almost) independent of $j$, and relatively low (as a function of $j$).

\subsection{Random power law DAGs}

We have three parameters in this model: $\beta>2$, $d>0$, and $0 < \alpha < 1$. For a given set of parameters, let
$$
M=M(n)=n^{\alpha}, \mbox{\ \ \ }
i_0=i_0(n)=n \left(\frac{d}{M}  \frac{\beta-2}{\beta-1}\right)^{\beta-1},$$
and
$$
c=\left(\frac{\beta-2}{\beta-1}\right) dn^{\frac{1}{\beta-1}}.
$$
Finally, for $i \ge 1$ let
$$
w_i = c (i_0+i-1)^{-\frac{1}{\beta-1}}.
$$
In this case, we refer to the model as \emph{random power law DAGs}.

We note that the sequence $\mathbf{w}$ is decreasing (in particular, the source has the largest expected degree). Moreover, the number of coordinates that are at least $k$ is equal to
$$
n \left( \frac {\beta-2}{\beta-1}  \frac {d}{k} \right)^{\beta-1} - i_0 = (1+o(1)) n \left( \frac {\beta-2}{\beta-1}  \frac {d}{k} \right)^{\beta-1} = \Theta(n k^{-\beta+1}),
$$
and hence the sequence follows a power-law with exponent $\beta$. From the same observation it follows that the maximum value is
$$
w_1 = c i_0^{-\frac{1}{\beta-1}} = M.
$$
Finally, the average of the first $n$ values is
$$
\frac {c}{n} \sum_{i=i_0}^{i_0+n-1} i^{- \frac{1}{\beta-1}} = (1+o(1)) \frac {c}{n} \left(\frac {\beta-1}{\beta-2} \right) n^{1-\frac{1}{\beta-1}} = (1+o(1)) d,
$$
since $M=o(n)$.

Our main result on the green number $g_j=g_j(G,v)$ in the case of power law sequences is the following.
\begin{theorem}\label{thm:main_power-law}
Let
$$
\gamma = d^{\beta-1} \left( \frac {\beta-2}{\beta-1} \right)^{\beta-2} \left( \left(1+ \left( d \frac{\beta-2}{\beta-1} \right)^{1-\beta} \right)^{\frac{\beta-2}{\beta-1}}-1\right)
$$
if $\frac {1}{\alpha} - \beta + 3 \in \N^+ \setminus \{1,2\}$, and $\gamma = 1$ otherwise.
Let $j_1$ be the largest integer satisfying $j_1 \le \max\{ \frac {1}{\alpha} - \beta + 3, 2\}$. Let $j_2 = O(\log \log n)$ be the largest integer such that
$$
d^{\beta-1} \left(\frac{\gamma}{d^{\beta-1}} n^{\alpha(j_1-1)-1} \right)^{\left(\frac{\beta-2}{\beta-1}\right)^{j_2-j_1}} \le (\omega \log \log n)^{- \max\{ 2, (\beta-1)^2\}}.
$$
Finally, let
$$
\xi = \left(\frac {\beta-2}{\beta-1}\right) d \left( \left( \frac {d(\beta-2)}{\beta-1} \right)^{\beta-1} + 1 \right)^{-\frac {1}{\beta-1}}.
$$

Then, for $1 \leq j \le j_2-1$ we have that a.a.s.
\begin{equation}\label{moo}
(1+o(1)) \bar{w}_{j} \le g_j \le (1+o(1)) \bar{w}_{j-1},
\end{equation}
where $\bar{w}_{0}=\bar{w}_{1}=M$, for $2 \leq j < \frac{1}{\alpha} - \beta +3$,
$$
\bar{w}_{j} =
\begin{cases}
n^{\alpha} & \text{ if } 2 \le j < \frac {1}{\alpha} - \beta + 2 \\
\xi n^{\alpha} & \text{ if } 2 \le j = \frac {1}{\alpha} - \beta + 2 \\
\left( \frac{\beta-2}{\beta-1} \right) d n^{\frac {1-\alpha(j-1)}{\beta -1}} & \text{ if } \frac {1}{\alpha} - \beta + 2 < j < \frac {1}{\alpha} - \beta + 3 \text{ and } j \ge 2,
\end{cases}
$$
and for $j_1 \leq  j \leq j_2-1$,
$$
\bar{w}_{j} = \left(\frac {\beta-2}{\beta-1} \right) \left(\frac{\gamma}{d^{\beta-1}} n^{\alpha(j_1-1)-1} \right)^{-\left(\frac{\beta-2}{\beta-1}\right)^{j-j_1}/(\beta-1)}.
$$
\end{theorem}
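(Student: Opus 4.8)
The plan is to reduce the game on the random DAG to a pursuit on an (asymptotically) tree-like structure, run a pair of matched strategies for the two players, and read off the values of $\bar{w}_j$ from a recursion for the out-degrees down the layers. First I would pin down the a.a.s.\ geometry of the DAG. Because the desired sequence is sparse (its average is $(1+o(1))d$ and $M=o(n)$), a first- and second-moment computation shows that throughout the first $\Theta(\log\log n)$ layers almost every vertex has in-degree exactly $1$, so its out-degree equals $(1+o(1))w_i$; in particular the neighbourhoods the sludge meets look like a tree. For each layer $L_k$ I would track both $|L_k|$ and the number of its vertices whose out-degree exceeds a given threshold, using that the $e_k$ edges creating $L_{k+1}$ land uniformly and independently in $S$, so that collisions are rare and Chernoff/Azuma bounds give concentration of the whole out-degree profile within a $(1+o(1))$ factor of its mean. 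This is where the piecewise description of $\bar{w}_j$ is born: while the layers keep growing they are filled with the largest still-unused coordinates of $\mathbf{w}$, so the maximum relevant out-degree at level $j$ stays at the flat top $M=n^{\alpha}$ until the heavy coordinates are exhausted (this happens near $j=\frac1\alpha-\beta+2$, exactly the first break-point, with $\xi$ governing the transition layer), after which it enters genuine power-law decay and then the double-exponential regime for $j_1\le j\le j_2-1$; the integer $j_2$ is chosen as the last level at which the relevant degree is still large enough for concentration (and the tree approximation) to hold.

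For the upper bound $g_j\le(1+o(1))\bar{w}_{j-1}$ I would give the greens a wait-and-block strategy. They protect irrelevant vertices (burning their quota) until the sludge arrives, at time $j$, at some vertex $u\in L_{j-1}$; at that point they protect all out-neighbours of $u$, which are sinks in $L_j$. Since a.a.s.\ every vertex of $L_{j-1}$ has out-degree at most $(1+o(1))\bar{w}_{j-1}$, a single round's quota of $c_j=(1+o(1))c$ protections suffices as soon as $c\ge(1+o(1))\bar{w}_{j-1}$, and the sludge is stranded one level above the sinks. The point is that no early protection is needed, so the leading constant is set entirely by the maximum out-degree in the penultimate layer.

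For the matching lower bound $g_j\ge(1+o(1))\bar{w}_j$ I would let the sludge descend greedily along unprotected children that still carry a large out-degree and sit atop a large unprotected subtree. The invariant, maintained using the structural estimates, is that at level $\ell$ the sludge can always reach such a child of out-degree at least $(1+o(1))\bar{w}_{\ell}$: the greens have spent only about $cj$ protections in total, which, when $c$ is a shade below $\bar{w}_j$, is too few to sever all the heavy continuations available at each frontier. Carrying this to level $j-1$ leaves the sludge on a vertex whose out-degree exceeds the at most $c_j<\bar{w}_j$ sinks the greens can block at the last round, so an unprotected sink remains and the sludge wins. The residual gap $[\bar{w}_j,\bar{w}_{j-1}]$ reflects that the greens must prepare for the heaviest reachable vertex while the sludge can only be guaranteed a somewhat lighter one, a discrepancy that is an additive $O(1)$ in the degree and hence invisible at the $(1+o(1))$ scale precisely because these degrees grow with $n$.

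The main obstacle is the combination of the structural step and the degree recursion: propagating sharp concentration of the out-degree profile through $\Theta(\log\log n)$ layers while the degrees shrink by a power at each level, and correctly locating the constant $\gamma$ and the regime boundaries. Because the two strategies must meet within $(1+o(1))$, one has to control the extreme upper tail of the out-degree distribution in each layer and handle the cross-over layers where the flat top of $\mathbf{w}$ stops dominating the power-law body; solving the resulting recursion $\bar{w}_j\sim(\mathrm{const}\cdot\bar{w}_{j-1})^{(\beta-2)/(\beta-1)}$ is what produces the stated double-exponential expression. By comparison, the two player strategies are routine once these estimates are in hand.
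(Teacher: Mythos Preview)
Your overall architecture matches the paper's: a structural lemma pinning down the layer sizes $\ell_j$ and the maximal weights $w_{\ell_j}=\bar w_j$ (the paper's Lemma~5.1), followed by a pair of strategies giving $(1-o(1))w_{\ell_j}\le g_j\le w_{\ell_{j-1}}$ (Lemma~5.2). Your upper bound via wait-and-block is exactly the paper's. The recursion you write, $\bar w_j\sim(\mathrm{const}\cdot\bar w_{j-1})^{(\beta-2)/(\beta-1)}$, is also what drives the double-exponential formula.

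The gap is in your lower bound for the later range $j_1<j\le j_2-1$. Your argument is that the total budget $cj=O(\bar w_j\log\log n)$ is too small to block all ``heavy continuations,'' which is correct \emph{provided} you know that, at the vertex $v$ where the sludge currently sits, enough children have nearly full out-degree. You propose to get this by Chernoff plus a union bound over all $v$, relying on the tree approximation. That works in the early regime $j\le j_1$ (the paper's first case, where degrees are polynomial in $n$ and there are only $O(1)$ layers), but it does not go through once degrees drop to $(\omega\log\log n)^{O(1)}$: the deviation probabilities are no longer $o(1/n)$, the layers are no longer locally trees, and---more to the point---the greens need not play greedily. A single protected vertex $u$ two layers below the sludge can, in principle, rule out many of the sludge's immediate options (every parent of $u$ becomes a child of $v$ with a protected out-neighbour), so ``only $cj$ protections'' does not automatically translate into ``only $O(cj)$ eliminated options.''

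The paper closes this hole with two ingredients you do not mention. First, it uses the sharp degree gap $w_{\ell_{j-1}}=\omega(w_{\ell_j}\log\log n)$, so that every vertex in layers up to $L_{j-2}$ has out-degree $\omega(c\log\log n)$; the sludge's strategy is to move to an unprotected child \emph{none of whose out-neighbours is protected}. Second, and crucially, it proves a combinatorial Claim: a.a.s.\ the number of directed paths of length~$2$ between any fixed pair of vertices is bounded by an absolute constant $K$. This is what guarantees that each of the $O(c\log\log n)$ protected vertices eliminates only $O(1)$ of the sludge's current options, so the sludge always has an admissible move. Without this path-counting step (or an equivalent control on how protections can compound through high-in-degree vertices), your lower bound is incomplete in the range where the theorem is hardest.
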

In the power law case, Theorem~\ref{thm:main_power-law} tells us that the green number is smaller for large $j$. This reinforces the view that intercepting a message in a hierarchical social network following a power law is more difficult close to levels near the source.
\section{Proofs for random $d$-regular DAGs}\label{sec:d-reg}

Before analyzing the game on random $d$-regular DAGs, we need a few lemmas. We will be using the following version of a well-known Chernoff bound.
\begin{lemma}[\cite{JLR}]\label{lem:Chernoff}
Let $X$ be a random variable that can be expressed as a sum $X=\sum_{i=1}^{n} X_{i}$ of independent random indicator variables where
$X_i$ is a Bernoulli random variable with success probability $p_i$ with (possibly) different $p_{i}=\mathbb{P} (X_{i} = 1)= \mathbb{E}X_{i} > 0$. Then the following holds for $t \ge0$:
\begin{align}
\mathbb{P} (X \ge\mathbb{E }X + t)  &  \le\exp\left(  - \frac{t^{2}}{2(\mathbb{E }X+t/3)} \right)  ,\label{eq:Ch1}\\
\mathbb{P} (X \le\mathbb{E }X - t)  &  \le\exp\left(  - \frac{t^{2}}{2\mathbb{E }X} \right)  \label{eq:Ch2}.
\end{align}
In particular, if $\varepsilon\le3/2$, then
\begin{align}\label{eq:Ch3}
\mathbb{P} (|X - \mathbb{E }X| \ge\varepsilon\mathbb{E }X)  &  \le2 \exp\left(  - \frac{\varepsilon^{2} \mathbb{E }X}{3} \right)  .
\end{align}
\end{lemma}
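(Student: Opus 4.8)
The plan is to prove all three inequalities by the classical exponential-moment (Chernoff) method, exploiting the independence of the summands and the Bernoulli structure to control the moment generating function. Write $\mu = \mathbb{E}X = \sum_{i=1}^n p_i$. For any $\lambda > 0$, Markov's inequality applied to $e^{\lambda X}$ gives $\mathbb{P}(X \ge \mu + t) \le e^{-\lambda(\mu+t)} \mathbb{E}[e^{\lambda X}]$. Independence factorises the moment generating function as $\mathbb{E}[e^{\lambda X}] = \prod_{i} \mathbb{E}[e^{\lambda X_i}]$, and since each $X_i$ is Bernoulli with mean $p_i$ we have $\mathbb{E}[e^{\lambda X_i}] = 1 + p_i(e^\lambda - 1) \le \exp(p_i(e^\lambda - 1))$, using $1 + x \le e^x$. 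Multiplying over $i$ yields the uniform estimate $\mathbb{E}[e^{\lambda X}] \le \exp(\mu(e^\lambda - 1))$, which is the only place the precise distribution enters.

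For the upper tail \eqref{eq:Ch1} I would substitute this estimate and minimise the exponent $-\lambda(\mu + t) + \mu(e^\lambda - 1)$ over $\lambda > 0$. The minimiser is $\lambda = \ln(1 + t/\mu)$, giving the sharp ``Poisson'' bound $\mathbb{P}(X \ge \mu + t) \le \exp\big(-\mu\,[(1+x)\ln(1+x) - x]\big)$ with $x = t/\mu$. To reach the stated rational form it then suffices to invoke the elementary inequality $(1+x)\ln(1+x) - x \ge \tfrac{x^2}{2(1+x/3)}$, valid for all $x \ge 0$; substituting $x = t/\mu$ turns the exponent into exactly $-\tfrac{t^2}{2(\mu + t/3)}$.

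The lower tail \eqref{eq:Ch2} is entirely analogous, using $e^{-\lambda X}$ with $\lambda > 0$ in place of $e^{\lambda X}$. The same factorisation and the bound $\mathbb{E}[e^{-\lambda X_i}] \le \exp(p_i(e^{-\lambda} - 1))$ lead, after optimising at $\lambda = -\ln(1 - t/\mu)$, to $\mathbb{P}(X \le \mu - t) \le \exp\big(-\mu\,[(1-x)\ln(1-x) + x]\big)$. Here the relevant elementary inequality is $(1-x)\ln(1-x) + x \ge \tfrac{x^2}{2}$ for $0 \le x < 1$, which delivers the exponent $-\tfrac{t^2}{2\mu}$ (the weaker, cleaner denominator is precisely why the lower tail carries no $t/3$ correction). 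Finally, the two-sided statement \eqref{eq:Ch3} follows by setting $t = \varepsilon\mu$ in both tails: the constraint $\varepsilon \le 3/2$ forces $1 + \varepsilon/3 \le 3/2$, so the upper-tail exponent is at least $\varepsilon^2\mu/3$, while the lower-tail exponent $\varepsilon^2\mu/2$ trivially exceeds it; adding the two probabilities produces the factor $2$.

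The computational heart --- and the only step that is not bookkeeping --- is verifying the two convexity-type inequalities $(1+x)\ln(1+x) - x \ge \tfrac{x^2}{2(1+x/3)}$ and $(1-x)\ln(1-x) + x \ge \tfrac{x^2}{2}$. Each is handled by a routine one-variable argument: both sides vanish to second order at $x = 0$, and comparing derivatives settles the rest (for the lower-tail inequality, $-\ln(1-x) \ge x$ finishes it at once; for the upper-tail inequality one clears the denominator and checks that the resulting log-polynomial expression stays nonnegative). Since the result is quoted from \cite{JLR}, in the paper itself I would simply cite it; the sketch above records the self-contained derivation one would supply if needed.
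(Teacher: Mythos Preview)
Your derivation is correct and is the standard exponential-moment proof of the Chernoff bounds. The paper itself does not prove this lemma at all: it is quoted verbatim from \cite{JLR} with no argument supplied, exactly as you anticipate in your final sentence.
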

We will start by proving the threshold for appearance of vertices of in-degree $k$.

\begin{lemma}\label{lem:layersize}
Let $\omega=\omega(n)$ be any function that grows (arbitrarily slowly) as $n$ tends to infinity. Then a.a.s.\ the following properties hold.
\begin{itemize}
\item [(i)] $|L_j| = (1-o(1))d(d-1)^{j-1}$ for any $1 \leq j \leq \log_{d-1}n-\omega$.
\item [(ii)]  For all $k \geq 2$, let $j_k=\frac{k-1}{k}\log_{d-1}n.$ For every $v \in L_j$, we have that $\deg^-(v) < k$ if $j< j_k -\omega$, and $\deg^-(v) = k$ for some $v \in L_{j_k+\omega}$. In particular, the threshold for the appearance of vertices of in-degree $k$ is $j_k$.
\end{itemize}
\begin{itemize}
\item [(iii)] $|L_j|=d(d-1)^{j-1}$ for $1 \leq j \leq \frac12 \log_{d-1}n-\omega$.
\end{itemize}
\end{lemma}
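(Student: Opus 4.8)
The plan is to treat the layer growth as a balls-into-bins process and prove the three parts in the order (iii), (i), (ii), since (iii) is the simplest collision estimate and (i) is an input to (ii). Write $\ell_j=|L_j|$ and let $e_j$ denote the number of edges emanating from $L_j$. Two deterministic facts drive everything: since every non-source vertex has in-degree $\ge 1$ (hence out-degree $\le d-1$) we have $\ell_j\le e_{j-1}\le d(d-1)^{j-1}$ and $e_j\le(d-1)\ell_j\le d(d-1)^j$; and since every edge out of $L_{j-1}$ lands in $L_j$, conditioned on the history through layer $j-1$ the in-degrees of $L_j$ are exactly the occupancies of the $n$ bins $S$ hit by $e_{j-1}$ independent uniform balls, with $\sum_{v\in L_j}\deg^-(v)=e_{j-1}$. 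For (iii) I would expose the edges one at a time: on the event that no two balls have yet collided, all in-degrees equal $1$, so $e_{i-1}=d(d-1)^{i-1}$ and $\ell_i=d(d-1)^{i-1}$ hold deterministically, and conditioned on this the probability of a collision while forming $L_i$ is at most $\binom{e_{i-1}}{2}/n$. Summing the conditional collision probabilities,
\[
\sum_{i=1}^{\lfloor\frac12\log_{d-1}n\rfloor-\omega}\binom{d(d-1)^{i-1}}{2}\frac1n = O\big((d-1)^{-2\omega}\big) = o(1),
\]
the geometric sum being dominated by its last term; hence a.a.s.\ no collision occurs and $\ell_j=d(d-1)^{j-1}$ on the whole range, which is (iii).

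For (i) the upper bound is the deterministic $\ell_j\le d(d-1)^{j-1}$, so only the matching lower bound is at stake. Dropping the truncation gives the one-sided recursion $e_j\ge d\,\ell_j-e_{j-1}$, which is all I need. I would show by induction that a.a.s.\ $e_j\ge(1-\epsilon_j)d(d-1)^j$ with $\epsilon_j=o(1)$. Given $e_{j-1}$, one has $\E\ell_j=n(1-(1-1/n)^{e_{j-1}})\ge e_{j-1}(1-e_{j-1}/(2n))$, and $\ell_j$ concentrates below its mean by \eqref{eq:Ch2}; writing $r_j=e_j/(d(d-1)^j)$, the recursion becomes $r_j\ge r_{j-1}\big(1-O(e_{j-1}/n)-O(\delta_j)\big)$ with concentration error $\delta_j\to0$. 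Since $e_{j-1}\le d(d-1)^{j-1}$ and $j\le\log_{d-1}n-\omega$, the geometric sum $\sum_i e_{i-1}/n=O((d-1)^{-\omega})=o(1)$; the $\delta_i$ are likewise summable because collisions matter only for $i\gtrsim\tfrac12\log_{d-1}n$ (by (iii), below that range $\delta_i=0$), where the layers are polynomially large so that \eqref{eq:Ch2} yields failure probability $n^{-\Omega(1)}$ with $\delta_i\to0$. Thus the product of the per-layer loss factors is $1-o(1)$, giving $\ell_j=(1-o(1))d(d-1)^{j-1}$.

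For (ii) I would apply the first- and second-moment methods to the number of bins receiving a prescribed number of the $e_{j-1}$ balls. The expected number of bins hit at least $k$ times is at most $\binom{e_{j-1}}{k}n^{-(k-1)}\le(d(d-1)^{j-1})^k/(k!\,n^{k-1})$ using the deterministic bound on $e_{j-1}$, which for $j<j_k-\omega=\tfrac{k-1}{k}\log_{d-1}n-\omega$ is $O((d-1)^{-k\omega})=o(1)$; a union bound over $j$ (the estimate decaying geometrically) then gives the first claim by Markov's inequality. For the threshold, take $j=j_k+\omega$, use part (i) to fix $e_{j-1}=(1-o(1))d(d-1)^{j-1}$, and estimate the number $N$ of bins hit exactly $k$ times: its expectation is $(1+o(1))\frac{e_{j-1}^k}{k!\,n^{k-1}}e^{-e_{j-1}/n}\to\infty$ (here $e_{j-1}/n=o(1)$ because $\omega$ grows slowly), while a second-moment estimate gives $\Var N=o((\E N)^2)$ from the negative correlation of the occupancy indicators, so a.a.s.\ $N\ge1$ and a vertex of in-degree $k$ appears.

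The main obstacle is the concentration step in (i): because $\ell_j$ counts distinct occupied bins rather than an honest sum of independent indicators, Lemma~\ref{lem:Chernoff} does not apply verbatim, and one must justify it via negative association of the occupancy indicators (or replace it by a bounded-differences inequality). The accompanying delicate point is the quantitative bookkeeping that keeps the accumulated multiplicative losses at $1-o(1)$ exactly up to $\log_{d-1}n-\omega$ and no further; it is precisely the $\omega$ gap below $\log_{d-1}n$ that makes all the relevant geometric sums $o(1)$.
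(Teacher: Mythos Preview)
Your proposal is correct, and for parts (ii) and (iii) it is essentially the paper's argument: first- and second-moment estimates on the number of bins hit $k$ times, with (iii) being the $k=2$ first-moment case (the paper in fact deduces (iii) from the first half of (ii) rather than via your separate birthday-style collision sum, but these are the same computation).

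Part (i) is where you diverge. The paper does not use concentration of $\ell_j$ at all. Instead it bounds, at each layer $j$, the number $X_j$ of vertices of in-degree $\ge 2$: since $\E X_j = O((d-1)^{2j}/n)$, Markov's inequality with a layer-dependent threshold $\alpha_j=(d-1)^{(\log_{d-1}n - j)/2}$ gives $X_j = O((d-1)^{3j/2}/\sqrt{n})$ with failure probability $1/\alpha_j$, and the $1/\alpha_j$ sum geometrically to $o(1)$. On that event one has the deterministic recursion $|L_{j+1}|=(d-1)|L_j|-O((d-1)^{3j/2}/\sqrt{n})$, and the multiplicative losses $1-O((d-1)^{j/2}/\sqrt{n})$ telescope to $1-o(1)$ over $j\le\log_{d-1}n-\omega$. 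Your route via $e_j\ge d\ell_j - e_{j-1}$ together with concentration of the occupied-bins count $\ell_j$ reaches the same conclusion; the trade-off is exactly the one you identify at the end --- you need negative association (or McDiarmid) to justify the Chernoff-type tail for $\ell_j$, whereas the paper's pure first-moment argument sidesteps that entirely at the price of the slightly artificial choice of $\alpha_j$. Either way the bookkeeping is the same: the geometric sums are $o(1)$ precisely because of the $\omega$ gap below $\log_{d-1}n$.
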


\begin{proof}
For (i) note that the probability that a given vertex $v \in S$ has in-degree $k \ge 2$ at level $j \ge 1$ is at most
$$
{ d(d-1)^{j-1} \choose k} \left( \frac {1}{n} \right)^k = O \left( \frac{(d-1)^{jk}}{n^k} \right).
$$
Thus, the expected number of vertices of in-degree $k$ at level $j$ is $O(\frac{(d-1)^{jk}}{n^{k-1}})$ and, in particular, the expected number of vertices of in-degree $2$ or more at $L_j$ is $O(\frac{(d-1)^{2j}}{n})$. Set $\alpha_j = (d-1)^{\frac12 \log_{d-1}n-\frac{j}{2}}$. By Markov's inequality, with probability at least $1-\frac{1}{\alpha_j}$ we derive that
\begin{equation}\label{eq:bound_deg2}
\Big| \{v \in L_j : \deg^-(v) \ge 2 \} \Big| = O \left(\frac{(d-1)^{2j}\alpha_j}{n} \right)=O \left(\frac{(d-1)^{3j/2}}{\sqrt{n}} \right).
\end{equation}
Since
$$
\sum_{j=1}^{\log_{d-1}n-\omega} \frac{1}{\alpha_j} = (d-1)^{-\frac12 \log_{d-1}n} \sum_{j=1}
^{\log_{d-1}n-\omega} (d-1)^{j/2}= O \Big((d-1)^{-\frac{\omega}{2}} \Big)=o(1),
$$
we obtain that a.a.s.\ \eqref{eq:bound_deg2} holds for all values of $j \leq \log_{d-1}n-\omega$. Since we aim for a statement that holds a.a.s., we can assume for $j \leq \log_{d-1}n-\omega$ that
\begin{equation}\label{eq:bound_deg}
|L_{j+1}| = (d-1) |L_j| - O \left(\frac{(d-1)^{3j/2}}{\sqrt{n}} \right).
\end{equation}

 We prove (i) by strong induction. It follows from~\eqref{eq:bound_deg2} that $|L_1|=d$, so (i) holds for $j=1$. Suppose that (i) holds for all $i<j$; that is, $|L_i| = (1-o(1))d(d-1)^{i-1}$. By~\eqref{eq:bound_deg} and the inductive hypothesis (used recursively), we obtain that
\begin{eqnarray*}
|L_j| & = & (d-1)|L_{j-1}|-O \left(\frac{(d-1)^{3j/2}}{\sqrt{n}} \right) \\
& = & (d-1)|L_{j-1}| \left( 1- O \left(\frac{(d-1)^{j/2}}{\sqrt{n}} \right)\right) \\
&=& d(d-1)^{j-1} E,
\end{eqnarray*}
where
\begin{eqnarray*}
E&=&\prod_{j \leq \log_{d-1}n-\omega} \left(1-O\left(\frac{(d-1)^{j/2}}{\sqrt{n}}\right)\right).
\end{eqnarray*}
Note that
\begin{eqnarray*}
E&=&\exp\left(-\sum_{j \leq \log_{d-1}n -\omega}O\left(\frac{(d-1)^{j/2}}{\sqrt{n}}\right)\right) \\
&=&\exp\left(-O \Big((d-1)^{-\omega/2}\Big)\right)=(1-o(1)).
\end{eqnarray*}

\bigskip

We now prove (ii) and (iii). By part (i), the number of random edges $e_{j-1}$ emanating from $L_{j-1}$ is $(1-o(1))d(d-1)^{j-1}.$ When layer $j$ is created, these edges are joined to random vertices in the set $S=\{ s_1, s_2, \ldots, s_n\}$ of cardinality $n$. For any fixed $k \geq 2$ and any fixed layer $1 \leq j \leq \log_{d-1}n-\omega$, we define the indicator variable $I_i$ to be $1$ if $s_i$ has in-degree $k$ and $0$ otherwise, for $i=1, 2, \ldots,n$. Let $X=\sum_{i=1}^n I_i$.

As observed in part (i) of this proof,
\begin{eqnarray*}
\Prob(I_i=1) &=& \binom{e_{j-1}}{k} \left(\frac{1}{n}\right)^k \left(1-\frac{1}{n}\right)^{e_{j-1}-k}\\
&=& \binom{e_{j-1}}{k} \left(\frac{1}{n}\right)^k (1+o(1)) = \Theta \left(\frac{(d-1)^{jk}}{n^k}\right),
\end{eqnarray*}
and thus, $\E(X)=\Theta(\frac{(d-1)^{jk}}{n^{k-1}})$. By Markov's inequality, a.a.s.\ for $j \le \frac{k-1}{k}\log_{d-1}n-\omega$, no vertices of in-degree $k$ are present. By considering the case $k=2$, this shows that for $j \leq \frac12 \log_{d-1}n-\omega$, all vertices have in-degree $1$ a.a.s., and thus, for such $j$ we have $|L_j| = d(d-1)^{j-1}$ a.a.s. Hence, (iii) holds.

We find that
\begin{eqnarray*}
\Prob(I_i=1, I_{i'}=1) &=& \binom{e_{j-1}}{k}\binom{e_{j-1}-k}{k} \left(\frac{1}{n} \right)^{2k} \left(1-\frac{2}{n}\right)^{e_{j-1}-2k} \\
&=& \binom{e_{j-1}}{k}^2 \left(\frac{1}{n} \right)^{2k} (1+o(1)).
\end{eqnarray*}
Thus,
\begin{eqnarray*}
\E(X^2)&=&\sum_{i \neq i'} (\Prob(I_i=1,I_{i'}=1)+\sum_{i} \Prob(I_i=1) \\
&\leq& \sum_{i \neq i'} \left(\binom{e_{j-1}}{k}^2 \left(\frac{1}{n}\right)^{2k} (1+o(1))\right)+ \E(X) \\
&=& \left[ n \binom{e_{j-1}}{k} \left(\frac{1}{n}\right)^{k} \right]^2 (1+o(1)) + \E(X) \\
& = & (\E(X))^2(1+o(1))+\E(X).
\end{eqnarray*}
Hence, for $j = \frac{k-1}{k}\log_{d-1}n+\omega$, it follows from Chebyshev's inequality that
$$
\Prob(X=0) \leq \frac{\Var(X)}{(\E(X))^2} = \frac {\E(X^2)}{(\E(X))^2} - 1 = o(1),
$$
and hence, a.a.s.\ there are vertices of in-degree $k$, proving part (ii) of the lemma.
\end{proof}

The lemma is enough to prove the first two parts of the main theorem.

\begin{proof}[Proof of Theorem~\ref{thm:main_d-reg} (i), (ii), and the upper bound of (iii)]
By Lemma~\ref{lem:layersize}~(iii) for $j \le \frac 12 \log_{d-1} n - \omega$ the game is played on a tree. Part (i) is trivial, since the greens have to protect all vertices in $L_1$, or they lose.

To derive the upper bound of (ii), note that for $c=d-2+\frac {1}{j}$ we have that $c_j=d-1$ ($c_i = d-2$ for $1 \le i \le j-1$). The greens can play arbitrarily during the first $j-1$ steps, and then block the sludge on level $j$. If $j \ge \omega$, then we have that $d-2$ is an upper bound of $g_j$, and the upper bound of (iii) holds.

To derive the lower bound of (ii), note that if $d-2 \le c < d-2-\frac{1}{j}$, then exactly $d-2$ new vertices are protected at each time-step.  Without loss of generality, we may assume that the greens always protect vertices adjacent to the sludge (since the game is played on the tree, there is no advantage to play differently). No matter how the greens play, there is always at least one vertex not protected and the sludge can reach $L_j$.
\end{proof}

\bigskip

For a given vertex $v \in L_t$ and integer $j$, let us denote by $S(v,j)$ the subset of $L_{t+j}$ consisting of vertices at distance $j$ from $v$ (that is, those that are in the $j$-th level of the subgraph whose root is $v$). Let $N(v,j)=\sum_{i=1}^j S(v,i)$ be the subgraph of all vertices of depth $j$ pending at $v$. Call a vertex $u \in S$ \emph{bad} if $u \in N(v,j)$ and $u$ has in-degree at least $2$ (recall, that $S$ is a set of $n$ vertices used in the process of generating a random graph). Let $X(v,j)$ be the total number of bad vertices in $N(v,j)$. In the next lemma, we estimate $X(v,j)$.

\begin{lemma}\label{lem:badvertices}
A.a.s.\ the following holds for some large enough constant $C' > 0.$ For any $v \in L_t$, where $t \leq \log_{d-1}n-\omega$, and any $j$ such that $\frac{(d-1)^{t+2j}}{n} \leq \log n$,
$$
X(v,j) \leq C' \log n.
$$
\end{lemma}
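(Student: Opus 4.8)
The plan is to bound the expected number of bad vertices in $N(v,j)$ directly and then apply Markov's inequality, together with a union bound over all relevant choices of $v$ and $j$. First I would estimate the expected size of $S(v,i)$, the $i$-th level of the subtree pending at $v \in L_t$. Since each node generates roughly $d-1$ outgoing edges (one in-edge is ``used up'' by the parent), the number of random edges emanating from level $i$ of this subtree is of order $(d-1)^i$, and the vertices of $S(v,i)$ live inside the global layer $L_{t+i}$. The key probabilistic input is the computation already carried out in Lemma~\ref{lem:layersize}~(i): the probability that a fixed vertex $s \in S$ acquires in-degree at least $2$ when layer $L_{t+i}$ is formed is $O\!\left(\frac{(d-1)^{2(t+i)}}{n^2}\right)$ per ordered pair of edges, so the expected number of in-degree-$\ge 2$ vertices appearing at level $t+i$ is $O\!\left(\frac{(d-1)^{2(t+i)}}{n}\right)$.

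Summing this bound over the levels $i=1,\dots,j$ of the pendant subtree gives
\begin{equation*}
\E\big(X(v,j)\big) = O\!\left( \sum_{i=1}^{j} \frac{(d-1)^{2(t+i)}}{n} \right) = O\!\left( \frac{(d-1)^{2(t+j)}}{n} \right),
\end{equation*}
where the last step uses that the geometric sum is dominated by its largest term $i=j$ (up to a constant factor depending only on $d$). Under the hypothesis $\frac{(d-1)^{t+2j}}{n} \le \log n$ — equivalently a bound controlling how deep the subtree reaches relative to $n$ — this expectation is $O(\log n)$. I would then invoke Markov's inequality to conclude that, for a single pair $(v,j)$, the probability that $X(v,j)$ exceeds $C' \log n$ is small for a suitable constant $C'$.

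The main obstacle is upgrading the single-$(v,j)$ Markov bound to a statement that holds a.a.s.\ \emph{simultaneously} for all $v \in L_t$ with $t \le \log_{d-1}n - \omega$ and all admissible $j$. A naive Markov bound gives only constant failure probability per pair, which is far too weak for a union bound over the $\Theta((d-1)^t)$ vertices in each layer. The fix is to exploit the slack in the hypothesis: the condition $\frac{(d-1)^{t+2j}}{n} \le \log n$ makes $\E(X(v,j))$ at most of order $\log n$, so I would instead bound the probability that $X(v,j) \ge C' \log n$ using a concentration inequality rather than plain Markov. Because $X(v,j)$ is a sum of indicator variables (one per vertex of $S$, recording whether it becomes bad) that are independent or negatively correlated, the Chernoff bound of Lemma~\ref{lem:Chernoff}, inequality~\eqref{eq:Ch1}, applies: taking $t = C'\log n$ with $\E(X(v,j)) = O(\log n)$ yields a failure probability of the form $\exp(-\Omega(C' \log n)) = n^{-\Omega(C')}$. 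Choosing $C'$ large enough makes this smaller than any fixed polynomial, which comfortably beats the union bound over the at most $n$ choices of $v$ across all layers and the $O(\log_{d-1} n)$ choices of $j$. Care is needed to ensure the indicator variables are genuinely amenable to the Chernoff framework — since bad vertices at different levels are created at different time-steps using disjoint batches of random edges, conditioning on the realized subtree sizes and applying the bound level-by-level (then summing the tail estimates) is the cleanest route.
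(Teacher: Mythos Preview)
Your expectation bound is off by a factor of $(d-1)^t$, and this is not a harmless slip --- it is exactly the missing idea. You bound the expected number of bad vertices in $S(v,i)$ by the expected number of in-degree-$\ge 2$ vertices in \emph{all} of $L_{t+i}$, namely $O\bigl((d-1)^{2(t+i)}/n\bigr)$. Summing over $i$ gives $O\bigl((d-1)^{2t+2j}/n\bigr)$, and the hypothesis $\frac{(d-1)^{t+2j}}{n}\le\log n$ does \emph{not} make this $O(\log n)$: note $2(t+j)=2t+2j\ne t+2j$, so your bound is only $(d-1)^t\cdot O(\log n)$, which can be as large as $n^{1-o(1)}$. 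Markov or Chernoff on that expectation gives nothing useful.

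The point you are missing is that ``bad'' already includes the condition $u\in S(v,i)$, and $S(v,i)$ is a tiny subset of $L_{t+i}$. For $u$ to land in $S(v,i)$ it must catch one of the $O((d-1)^i)$ edges emanating from $S(v,i-1)$, an event of probability $O((d-1)^i/n)$; for $u$ to be bad it needs a \emph{second} incoming edge from somewhere in $L_{t+i-1}$, of which there are $O((d-1)^{t+i})$, contributing a further factor $O((d-1)^{t+i}/n)$. Hence
\[
\Prob\bigl(u\in S(v,i)\text{ and }u\text{ bad}\bigr)=O\!\left(\frac{(d-1)^{t+2i}}{n^2}\right),
\]
so summing over $u\in S$ and then over $i$ yields $\E X(v,j)=O\bigl((d-1)^{t+2j}/n\bigr)=O(\log n)$, which is the exponent the hypothesis actually controls. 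With this correct expectation in hand, the rest of your plan --- stochastic domination by independent indicators plus Chernoff~\eqref{eq:Ch1} to get failure probability $o(n^{-1})$, then a union bound over the $O(n)$ choices of $v$ --- matches the paper's argument and works.
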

\begin{proof}
Fix $v \in L_t$ and let $j$ be the maximum integer satisfying $\frac{(d-1)^{t+2j}}{n} \leq \log n$. Since there are $O(n)$ possible vertices to consider, it is enough to show that the bound holds with probability $1-o(n^{-1})$.

For $u \in S$, let $I_u(v,i)$ ($1 \le i \le j$) be the event that $u \in S(v,i)$ and $u$ is bad. In order for $u$ to be in $S(v,i)$, $u$ must receive at least one edge from a vertex in $S(v,i-1)$, and in order to be bad it must have at least one more edge from either $S(v,i-1)$ or from another vertex at layer $L_{t+i-1}$. Thus,
$$
\Prob(I_u(v,i)) = \frac{O((d-1)^i)}{n}\frac{O((d-1)^{t+i})}{n}= O\left(\frac{(d-1)^{t+2i}}{n^2}\right),
$$
since there are $O((d-1)^i)$ edges emanating from $S(v,i-1)$, and there are $O((d-1)^{t+i})$ edges emanating from $L_{t+i-1}$. Letting $I_u=I_u(v,i)$ being the corresponding indicator variable, we have that
$$
\E \left(\sum_{u \in S} I_u \right) = O \left(\frac{(d-1)^{t+2i}}{n} \right).
$$
Note that $\Prob(I_u = 1 ~~|~~ I_{u'}=1) \leq \Prob(I_u=1)$, since $\Prob(I_u = 1 ~~|~~ I_{u'}=1) \leq \Prob(I_u = 1 ~~|~~ I_{u'}=0)$ (for a fixed total number of edges, the probability for $u$ to be bad is smaller if another vertex $u'$ is bad) and, by the law of total probability, at least one of the two conditional probabilities has to be at most $\Prob(I_u=1)$. Thus, $\sum_{u \in S} I_u$ is bounded from above by $\sum_{u \in S} I'_u$, where the $I'_u$ are independent indicator random variables with
$$
\Prob(I_u=1)\leq \Prob(I'_u=1)=C\frac{(d-1)^{t+2i}}{n^2}
$$
for some sufficiently large $C>0$. The total number of bad vertices in the subgraph of depth $j$ pending at $v$ is $X=\sum_{i=1}^j \sum_{u \in S} I_u(v,i) \leq \sum_{i=1}^j \sum_{u \in S} I'_u(v,i).$ Since $\frac{(d-1)^{t+2j}}{n} \leq \log n,$
$$
\E(X) \leq \sum_{i=1}^j C\frac{(d-1)^{t+2i}}{n}=O \left(\frac{(d-1)^{t+2j}}{n} \right) = O(\log n),
$$
and by the Chernoff bound given by~(\ref{eq:Ch1}), $X \leq C' \log n$ with probability $1- o(n^{-1})$ for some $C'>0$ large enough.
\end{proof}

\bigskip

We need one more lemma. For a given vertex $v \in L_t$ and integer $j$, a vertex $u \in S$ is called \emph{very bad} if it has at least two incoming edges from vertices in $S(v,i-1)$. In particular, every very bad vertex is bad. Let $Z(v,j)$ be the number of very bad vertices in $N(v,j)$.

For a given $T = \Theta(\log \log n)$, and any $\hat{L}_T \subseteq L_T$ such that $|\hat{L}_T| = o(|L_T| / \log^2 n)$, we will consider the subgraph $G(\hat{L}_T)$ consisting of all vertices to which there is a directed path from some vertex in $\hat{L}_T$. For any $t > T$, let $\hat{L}_t$ be a subset of $L_t$ that is in $G(\hat{L}_T)$.

\begin{lemma}\label{lem:notworoots}
Let $\hat{L}_T \subseteq L_T$ for some $T = \Theta(\log \log n)$ be such that $|\hat{L}_T| = o(|L_T|/\log^2 n)$. Then a.a.s.\ for any $v \in \hat{L}_t$, where $T \le t \leq \log_{d-1}n-\omega$, and any integer $j$ with $\frac{(d-1)^{t+2j}}{n} \leq \log n$, we have that $Z(v,j)=0$.
\end{lemma}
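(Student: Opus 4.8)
The plan is to bound the expected total number of very bad vertices occurring in any of the subtrees rooted at vertices of $\hat{L}_T$ (at any admissible root and depth), and to show that this expectation is $o(1)$; a first-moment (Markov) argument then gives that a.a.s.\ none exist, which is exactly the assertion $Z(v,j)=0$ for all admissible $v$ and $j$.

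First I would estimate, for a fixed root $v \in L_t$ and a fixed depth $i$, the probability that a given $u \in S$ is very bad at depth $i$ with respect to $v$; that is, that $u$ receives at least two edges from $S(v,i-1)$. Since every (non-source) vertex has out-degree at most $d-1$ and $|S(v,i-1)| = O((d-1)^{i-1})$ deterministically, the number of edges emanating from $S(v,i-1)$ is $O((d-1)^i)$. Conditioning on the graph revealed up to layer $L_{t+i-1}$, these edges are independent and uniform on $S$, so the probability that at least two of them land on $u$ is at most $\binom{O((d-1)^i)}{2}(1/n)^2 = O((d-1)^{2i}/n^2)$, uniformly over the conditioning. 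Summing over $u \in S$ and over $1 \le i \le j$, and then invoking the hypothesis $(d-1)^{t+2j}/n \le \log n$, gives
$$
\E(Z(v,j)) = O\left(\frac{(d-1)^{2j}}{n}\right) = O\left((d-1)^{-t}\log n\right).
$$

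The remaining step is the union bound over all roots. I would first note that $Z(v,j)$ is monotone nondecreasing in $j$ (since $N(v,j) \subseteq N(v,j+1)$), so it suffices to control $Z(v,J_v)$ at the largest admissible depth $J_v$. I would also record the deterministic bound $|\hat{L}_t| \le |\hat{L}_T|(d-1)^{t-T}$, which together with the hypothesis $|\hat{L}_T| = o(|L_T|/\log^2 n)$ and Lemma~\ref{lem:layersize} yields $|\hat{L}_t| = o((d-1)^t/\log^2 n)$ throughout the range $T \le t \le \log_{d-1}n - \omega$. Combining these,
$$
\sum_{t=T}^{\log_{d-1}n-\omega}\sum_{v \in \hat{L}_t}\Prob(Z(v,J_v)\ge 1) \le \sum_{t} |\hat{L}_t|\, O\left((d-1)^{-t}\log n\right) = \sum_{t} o\left(\frac{1}{\log n}\right) = o(1),
$$
because the factor $(d-1)^t$ cancels and there are only $O(\log n)$ values of $t$. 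By Markov's inequality this shows that a.a.s.\ no very bad vertex occurs, i.e.\ $Z(v,j)=0$ for every admissible $v$ and $j$.

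I expect the main obstacle to be the bookkeeping around the randomness of the sets $\hat{L}_t$ and $S(v,i-1)$: these are not fixed but are revealed together with the graph, so the per-$u$ probability estimate must be carried out conditionally on the history up to layer $L_{t+i-1}$, exploiting that out-degrees are bounded by $d-1$ deterministically and that the edges into the next layer are fresh and independent. The other delicate point is that the union bound succeeds only because of the $\log^2 n$ slack in the hypothesis on $|\hat{L}_T|$, which survives the cancellation of $(d-1)^t$ and still beats the $O(\log n)$ number of levels; without this slack the first moment would fail to be $o(1)$.
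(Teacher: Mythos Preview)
Your proposal is correct and follows essentially the same route as the paper: both bound $\Prob(H_u(v,i))=O((d-1)^{2i}/n^2)$ from the deterministic edge count out of $S(v,i-1)$, sum to get $\E(Z(v,j))=O((d-1)^{-t}\log n)$, combine with the deterministic bound $|\hat{L}_t|\le|\hat{L}_T|(d-1)^{t-T}=o((d-1)^t/\log^2 n)$, and finish by Markov over the $O(\log n)$ layers. Your remarks on conditioning and on the role of the $\log^2 n$ slack are exactly the points the paper uses implicitly.
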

\begin{proof}
Fix any $v \in \hat{L}_t$ for some $T \le t \leq \log_{d-1}n-\omega$. As in Lemma~\ref{lem:badvertices}, by letting $H_u(v,i)$ be the event that $u \in S$ is very bad, we have
$$
\Prob(H_u(v,i)) = \frac{O((d-1)^i)}{n}\frac{O((d-1)^{i})}{n}=O\left(\frac{(d-1)^{2i}}{n^2}\right).
$$
Letting $H_u=H_u(v,i)$ be the corresponding indicator variable, we have that
$$
\E \left(\sum_{u \in S} H_u\right) = O \left(\frac{(d-1)^{2i}}{n} \right).
$$
Analogously as in the previous proof, define independent indicator random variables $H'_u$ with $\Prob(H_u=1)\leq \Prob(H'_u=1)=C\frac{(d-1)^{2i}}{n^2}$. We have
$$
Z(v,j)=\sum_{i=1}^j \sum_{u \in S} H_u(v,i) \leq \sum_{i=1}^j \sum_{u \in S} H'_u(v,i),
$$
and so
$$
\E(Z(v,j)) \leq \sum_{i=1}^j C\frac{(d-1)^{2i}}{n}=O \left(\frac{(d-1)^{2j}}{n} \right) =O \left(\frac{\log n}{(d-1)^t} \right),
$$
since $\frac{(d-1)^{t+2j}}{n} \leq \log n$.

As $|\hat{L}_T| = o(|L_T| / \log^2 n)$, we have that
$$
|\hat{L}_t| \le |\hat{L}_T| (d-1)^{t-T} = o((d-1)^t / \log^2 n),
$$
and so the expected number of very bad vertices found in $\hat{L}_t$ is $o(1/\log n)$. Finally, the expected number of very bad vertices in any sublayer $\hat{L}_t$ ($T \le t \le \log_{d-1} n - \omega$) is $o(1)$, and the result holds by Markov's inequality.
\end{proof}

We now come back to the proof of the main theorem for random regular DAGs.
\begin{proof}[Proof of Theorem~\ref{thm:main_d-reg}(iii) and (iv)]
Note that we already proved an upper bound of (iii) (see the proof of parts (i) and (ii)). Since $g_j$ is non-increasing as a function of $j$, an upper bound of (iv) also holds.

We will prove a lower bound of (iv) first. The lower bound of (iii) will follow easily from there. Let $s \in \N^+$ and suppose that we play the game with parameter $c = d-2-\frac {1}{s}$. If $s \neq 1$, then for every $i \in \N$, we have that $c_{si+1} = d-3$ and $c_t = d-2,$ otherwise. (For $s=1$ we find that $c_t=d-3$ for any $t$.) Suppose that the greens play \emph{greedily} (that is, they always protect vertices adjacent to the sludge) and the graph is locally a tree. Note that during the time between $si+2$ and $s(i+1)$, they can direct the sludge leaving him exactly one vertex to choose from at each time-step. However, at time-step $s(i+1)+1$, the sludge has 2 vertices to choose from. The sludge has to use this opportunity wisely, since arriving at a bad vertex (see definition above) when the greens can protect $d-2$ vertices would result in him losing the game. Our goal is to show that the sludge can avoid bad vertices and, as a result, he has a strategy to reach the sink $L_j$. Since we aim for a statement that holds a.a.s.\ we can assume that all properties mentioned in Lemmas~\ref{lem:layersize},~\ref{lem:badvertices}, and~\ref{lem:notworoots} hold.

Before we describe a winning strategy for the sludge, let us discuss the following useful observation. While it is evident that the greens should use a greedy strategy to play on the tree, it is less evident in our situation. Perhaps instead of playing greedily, the greens should protect a vertex far away from the sludge, provided that there are at least two paths from the sludge to this vertex. However, this implies that the vertex is very bad and we know that very bad vertices are rare. It follows from Lemma~\ref{lem:notworoots} that there is no very bad vertex within distance $j$, provided that the sludge is at a vertex in $L_t$, $t = \Omega(\log \log n)$  and $\frac {(d-1)^{t+2j}}{n} \le \log n$. (For early steps we know that the graph is locally a tree so there are no bad vertices at all.) Therefore, without loss of generality, we can assume that at any time-step $t$ of the game, the greens protect vertices greedily or protect vertices at distance at least $j$ where $j$ is the smallest value such that $\frac {(d-1)^{t+2j}}{n} > \log n$. We call the latter protected vertices \emph{dangerous}. The sludge has to make sure that there are no nearby bad nor dangerous vertices.

Let
$$
T= s (\log_2 \log n + C),
$$
where the constant $C>0$ will be determined soon and is sufficiently large such that the sludge is guaranteed to escape from all bad or dangerous vertices which are close to him. Let $\delta = {3}/{\log_2 \left(\frac {d-1}{d-2} \right)}.$  During the first $\delta T$ time-steps, the sludge chooses any arbitrary branch. Since he is given this opportunity at least $\delta \log_2 \log n = 3 \log_{(d-1)/(d-2)} \log n$  times and each time he cuts the number of possible destinations by a factor of $\frac {d-2}{d-1}$, the number of possible vertices the sludge can reach at time $\delta T$ is $O(|L_{\delta T}| / \log^3 n)$. From that point on, it follows from Lemma~\ref{lem:notworoots} that there are no nearby very bad vertices. At time $t_1= \delta T$, by Lemma~\ref{lem:layersize}, there are no bad vertices at distance
$$
d_1 = \frac12 \log_{d-1}n - \delta T-\omega \ge T
$$
from the sludge, and hence, no dangerous vertices within this distance. It follows from Lemma~\ref{lem:badvertices} that there are $O(\log n)$ bad vertices at distance
$$
\bar{d}_1 = \frac12 \log_{d-1}n + \frac12 \log_{d-1} \log n-\frac{\delta T}{2}.
$$
There are $O(\log n)$ dangerous vertices within this distance (since the total number of protected vertices during the whole game is of this order). Thus, there are $O(\log n)$ bad or dangerous vertices at a distance between $d_1$ and $\bar{d}_1$ from the sludge.

To derive a lower bound on the length of the game, we provide a strategy for the sludge that allows him to play for at least a certain number of steps, independently of the greens' behaviour.  In particular, his goal is to avoid these bad or dangerous vertices: as long as the sludge is occupying a vertex that is not bad, there is at least one vertex on the next layer available to choose from.  More precisely, it follows from Lemma~\ref{lem:notworoots}, that from time $\delta T$ onwards, locally there are no very bad vertices. Let us call a \emph{round} a sequence of $T$ time-steps. Since all bad vertices are in distinct branches,  in every $s$-th time-step the sludge can half the number of bad vertices. Therefore, after one round the sludge can escape from all $(C'+1)\log n$ bad or dangerous vertices that are under consideration in a given round, provided that $C>0$ is large enough constant. (Recall the constant $C'$ is defined in Lemma~\ref{lem:badvertices}.)

Using this strategy, at time $t_2=(\delta + 1)T$ there are no bad or dangerous vertices at distance
$$
d_2 = \bar{d_1} - T = \frac12 \log_{d-1} n + \frac12 \log_{d-1} \log n-\frac {\delta+2}{2} T \ge T.
$$
To see this, note that since the sludge escaped from all bad or dangerous vertices, which at time $t_1$ were at distance $\bar{d_1}$, and he has advanced $T$ steps by now.  Using Lemma~\ref{lem:badvertices} again, we find that there are $O(\log n)$ bad or dangerous vertices at distance
$$
\bar{d_2} = \frac12 \log_{d-1}n + \frac12 \log_{d-1} \log n - \frac{\delta+1}{2} T.
$$
Arguing as before, we find that it takes another $T$ steps to escape from them.

In general, at time $t_i= (\delta+i-1)T$, there are $O(\log n)$ bad or dangerous vertices at a distance between
$$
d_i = \frac12 \log_{d-1} n + \frac12 \log_{d-1} \log n-\frac {\delta+i}{2} T
$$
and
$$
\bar{d}_i = \frac12 \log_{d-1} n + \frac12 \log_{d-1} \log n-\frac {\delta+i-1}{2} T
$$
Thus, as long as $d_i \ge T$, the strategy of escaping from bad or dangerous vertices before actually arriving at that level is feasible. Moreover, we can finish this round, and so the sludge is guaranteed to use this strategy until time $t_i$, where $i$ is the smallest value such that $d_i \le T$. Solving this for $i$ we obtain that
$$
\frac{(\delta+i+2)}{2}T  \leq  \frac12 \log_{d-1}n + \frac12 \log_{d-1} \log n,
$$
and so
$$
t_i = (\delta + i - 1) T \leq  \log_{d-1}n+\log_{d-1}\log n - 3T.$$
Hence,
$$
t_i = (\delta + i - 1) T  \leq  \log_{d-1}n- 3s \log_2 \log n + \log_{d-1} \log n - O(1).
$$
Finally, note that if $i$ is the smallest value such that $d_i \leq T$, we get that $d_{i-1}\geq T$ and so $d_i \geq \frac{T}{2}$. Hence, another $T/2$ steps can be played, and the constant of the second order term can be improved from $3s \log_2 \log n$ to $\frac52 s \log_2 \log n$, yielding part (iv). Part (iii) follows by taking $s$ to be a function of $n$ slowly growing to infinity.
\end{proof}

\bigskip

Our next goal is to show that when $j = \log_{d-1} n + C$, the value of $g_j$ is slightly smaller than $d-2$, provided that $C$ is a sufficiently large constant. However, before we do it, we need one more observation. It follows from Lemma~\ref{lem:layersize}(i) that a.a.s.\ $|L_t| = (1-o(1)) d(d-1)^{t-1}$ for $t = \log_{d-1} n - \omega$ ($\omega = \omega(n)$ is any function tending to infinity with $n$, as  usual). However, this is not the case when $t = \log_{d-1} n + O(1)$. At this point of the process, a positive fraction of vertices of $L_t$ are bad. This, of course, affects the number of edges from $L_t$ to $L_{t+1}$. In fact, the number of edges between two consecutive layers converges to $c_0 n$ as shown in the next lemma.

\begin{lemma}\label{lem:edges_conv}
Let $c_0$ be the constant satisfying
$$
\sum_{k=1}^{d-1} (d-k)\frac{c^k}{k!}e^{-c}=c.
$$
For every $\eps >0$, there exists a constant $C_{\eps}$ such that a.a.s.\ for every $\log_{d-1} n + C_{\eps} \le t \le 2 \log_{d-1} n$,
$$
(1-e^{-c_0+\eps})n ~~\le~~ |L_t| ~~\le~~ (1-e^{-c_0-\eps})n,
$$
and the number of edges between $L_t$ and $L_{t+1}$ is at least $(c_0-\eps)n$ and at most $(c_0+\eps)n$.
\end{lemma}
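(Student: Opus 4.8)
The plan is to track the number of edges between consecutive layers as a one-dimensional dynamical system, and to show that this quantity, normalised by $n$, converges to the fixed point $c_0$ and then stays within $\eps$ of it. Let $e_t$ denote the number of edges emanating from $L_t$ and put $c_t = e_t/n$. When $L_{t+1}$ is formed, these $e_t$ edges are thrown independently and uniformly into the $n$ vertices of $S$, so the in-degree of a fixed vertex is distributed as $\mathrm{Bin}(e_t,1/n)$, which for $e_t = \Theta(n)$ is well approximated by $\mathrm{Pois}(c_t)$. A vertex of in-degree $k$ emits $\max\{d-k,0\}$ edges, while vertices of in-degree $0$ do not belong to $L_{t+1}$; hence, writing $f(c) = \sum_{k=1}^{d-1}(d-k)\frac{c^k}{k!}e^{-c}$, the (conditional) expected number of edges leaving $L_{t+1}$ is $(1+o(1))\,n f(c_t)$. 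The defining relation for $c_0$ is precisely $f(c_0)=c_0$, so the whole statement reduces to proving $c_t \to c_0$ and translating this into bounds on $|L_t|$.

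First I would analyse the deterministic map $f$. One checks $f(0)=0$, $f'(0)=d-1>1$, and $f(c)\to 0$ as $c\to\infty$; a short analysis shows that $f-\mathrm{id}$ is unimodal on $(0,\infty)$ and vanishes at $0$, giving a \emph{unique} positive fixed point $c_0$, necessarily with $f'(c_0)<1$. The more delicate point is that $f'(c_0)>-1$, so that $c_0$ is attracting: a direct computation shows $f'(c_0)\to -1^+$ as $d\to\infty$ but stays strictly above $-1$ for every $d\ge 3$. Consequently, for each $\eps>0$ there is $\rho<1$ with $|f(c)-c_0|\le \rho\,|c-c_0|$ on $[c_0-\eps,c_0+\eps]$, so this interval is invariant with a uniform margin, and the orbit of $f$ from any positive starting density enters it within a bounded number $C_\eps$ of steps once the density is $\Theta(1)$.

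Next I would add the stochastic layer. Conditioning on the configuration at level $t$ fixes $e_t$, and after Poissonisation the in-degrees become independent, so $e_{t+1}$ is a sum of independent bounded contributions; Lemma~\ref{lem:Chernoff} (equivalently, a bounded-differences inequality, since relocating a single edge changes $e_{t+1}$ by at most $d$) then gives $|e_{t+1}-\E e_{t+1}|\le \eps' n$ with failure probability $e^{-\Omega(n)}$, and the same applies to $|L_{t+1}|$. This easily survives a union bound over the $O(\log n)$ levels up to $2\log_{d-1}n$. I would start the coupling at $t_1=\log_{d-1}n-\om$, where Lemma~\ref{lem:layersize}(i) pins down $e_{t_1}$ (hence $c_{t_1}=o(1)$) up to a $(1+o(1))$ factor. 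Through the climbing phase $f$ is essentially linear and homogeneous, so it \emph{preserves relative error}, and the fresh per-step fluctuations are only $n^{-1/2+o(1)}$ in relative terms; once the density reaches $\Theta(1)$ the contraction above takes over and the additive $\eps' n$ fluctuations sum, against the geometric factor $\rho$, to $o(n)$. Combining the two regimes shows that a.a.s. $c_t\in[c_0-\eps,c_0+\eps]$ for all $\log_{d-1}n+C_\eps\le t\le 2\log_{d-1}n$.

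Finally, the occupancy concentration gives $|L_t| = (1+o(1))\,n\bigl(1-e^{-c_{t-1}}\bigr)$; since $1-e^{-c}$ is increasing, $c_{t-1}\in[c_0-\eps,c_0+\eps]$ yields the claimed $(1-e^{-c_0+\eps})n\le |L_t|\le (1-e^{-c_0-\eps})n$ (after adjusting $\eps$), while the edge bound $(c_0-\eps)n\le e_t\le(c_0+\eps)n$ is immediate. I expect the main obstacle to be the dynamical core rather than the probability: proving that $c_0$ is genuinely attracting uniformly in $d$ (the derivative approaches $-1$ as $d$ grows, so one must verify strict stability for every $d\ge 3$, e.g. via an explicit sub/super-solution barrier that $f$ maps into itself with a margin exceeding the $o(n)$ stochastic fluctuation), together with controlling error propagation across the transition from the expanding near-tree phase (where $f$ amplifies, but only multiplicatively, so relative accuracy is preserved) to the contracting phase near $c_0$.
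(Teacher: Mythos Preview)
Your approach is essentially the paper's: both track the normalised edge count $c_t=e_t/n$ as a one-dimensional dynamical system with update map $f(c)=\sum_{k=1}^{d-1}(d-k)\frac{c^k}{k!}e^{-c}$, use Poisson approximation plus Chernoff for per-step concentration, and conclude that $c_t$ settles near the fixed point $c_0$.

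The difference is one of rigour. The paper's proof is a sketch: it notes that ``if less than $c_0 n$ edges are incoming, then more will be going out, and vice versa'' (i.e.\ $f(c)>c$ for $c<c_0$ and $f(c)<c$ for $c>c_0$) and then simply asserts convergence. You correctly observe that this sign condition alone does not rule out overshoot and oscillation; one genuinely needs $|f'(c_0)|<1$, and you have identified the nontrivial half, $f'(c_0)>-1$, as the crux. Your computation $f'(c)=e^{-c}\bigl[(d-1)-\sum_{j=1}^{d-1}c^{j}/j!\bigr]$ and the heuristic $f'(c_0)\approx de^{-c_0}-1\to -1^{+}$ are on target, and your two-phase error analysis (multiplicative error preservation in the near-linear climbing regime, geometric contraction once near $c_0$) is the right way to make the sketch honest. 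The paper does not discuss any of this. So: same method, but you are filling a gap the paper leaves open; the ``main obstacle'' you anticipate is real, though for each fixed $d\ge 3$ it is a finite check (uniformity in $d$ is not needed here since $d$ is a fixed constant in the model).
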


\begin{proof}
Suppose that the layer $L_t$ has in total $cn$ random incoming edges, for some $c=c(n) \in (0,1]$. Then the probability that a vertex $v \in S$ (recall that $S$ is the set of cardinality $n$ used to create layer $L_t$) has in-degree $k \in \N $ (that is, absorbs $k$ incoming edges, or attracts no edge if $k=0$) is
$$
\binom{cn}{k} \left(\frac{1}{n}\right)^k \left( 1-\frac{1}{n} \right)^{cn-k} = (1+o(1)) \frac{c^k}{k!}e^{-c}.
$$
Note that each vertex of in-degree $1 \le k \le d-1$ generates $d-k$ edges to the next layer. Further, vertices of in-degrees $k$ or more do not have any offspring, and vertices of $S$ of in-degree zero are not in $L_t$. Therefore, the expected number of outgoing edges produced by all vertices in layer $L_t$ is
$$
(1+o(1)) \sum_{k=1}^{d-1} (d-k) \frac{c^k}{k!}e^{-c}n.
$$
The events considered here are almost independent (one can compute higher moments and see that the $k$th moment is asymptotically equal to the $k$th power of the first moment), so for any $0 \leq k \leq d-1$ it follows from Chernoff bounds that with probability $1-o(\log^{-1} n)$ the number of vertices of degree $k$ is $(1+o(1)) c^k e^{-c}/k!$. Thus, with the same probability, strong concentration also follows for the number of edges. If the number of incoming edges equals $c_0 n$, then the expected number of outgoing edges equals
$$
(1+o(1)) \sum_{k=1}^{d-1} (d-k)\frac{c_0^k}{k!}e^{-c_0}n = (1+o(1)) c_0 n.
$$
If less than $c_0 n$ edges are incoming, then more will be going out, and vice versa. A.a.s.\ the process converges and so there exists a constant $C_{\eps}$ such that a.a.s.\ the number of edges between two consecutive layers $L_t$ and $L_{t+1}$ is between $(c_0-\eps)n$ and $(c_0+\eps)n$ for any $t$ such that $\log_{d-1} n + C_{\eps} \le t \le 2 \log_{d-1} n$.

Finally, let us recall that the layer $L_t$ consists of vertices of $S$ with in-degree at least one. The number of in-degree $0$ vertices is concentrated around its expectation, and thus we have that a.a.s.\
$$
(1-e^{-c_0+\eps})n \le |L_t| \le (1-e^{-c_0-\eps})n.
$$
The lemma is proved.
\end{proof}

The value of $c_0$ (and so $1-e^{-c_0}$ as well) can be numerically approximated. It is straightforward to see that $c_0$ tends to $d/2$ (hence, $1-e^{-c_0}$ tends to $1$) when $d \to \infty$. We present below a few approximate values.

\begin{table}[htdp]
\begin{center}
\begin{tabular}{|c|c|c|c|c|c|}
\hline
$d$ & 3 & 4 & 5 & 10 & 20 \\
\hline
$c_0$ & 0.895 & 1.62 & 2.26 & 4.98 & $\approx 10$ \\
$1-e^{-c_0}$ & 0.591 & 0.802 & 0.895 & 0.993 & $\approx 1$ \\
\hline
\end{tabular}
\end{center}
\caption{Approximate values of $c_0$ and $1-e^{-c_0}$.}\label{tab:small_values}
\end{table}

Finally, we are ready to finish the last part of Theorem~\ref{thm:main_d-reg}.\smallskip

\noindent \emph{Proof of Theorem~\ref{thm:main_d-reg}(v)}.
We assume that the game is played with parameter $c = d-2-\frac {1}{s}$ for some $s \in \N^+ \setminus \{1,2,3\}$. For every $i \in \N$, we have that $c_{si+1} = d-3$, and $c_t = d-2$, otherwise. To derive an upper bound of $g_j$ that holds a.a.s., we need to prove that a.a.s.\ there exists no winning strategy for the sludge.

We will use a combinatorial game-type argument. The greens will play \emph{greedily} (that is, they will always protect nodes adjacent to the sludge). Suppose that the sludge occupies node $v \in L_{si+1}$ for some $i \in \N$ (at time $t=si+2$ he moves from $v$ to some node in $L_t$) and he has a strategy to win from this node, provided that no node in the next layers is protected by the greens. We will call such a node \emph{sludge-win}. Note that during the time period between $si+2$ and $s(i+1)$, the greens can protect $d-2$ nodes at a time, so they can direct the sludge leaving him exactly one node to choose from at each time-step. Therefore, if there is a node of in-degree at least 2 in any of these layers, the greens can force the sludge to go there and finish the game in the next time-step. This implies that all nodes within distance $s-2$ from $v$ (including $v$ itself) must have in-degree 1 and so the graph is locally a tree. However, at time-step $s(i+1)+1$, the greens can protect $d-3$ nodes, one less than in earlier steps. If the in-degree of a node reached at this layer is at least 3, then the greens can protect all out-neighbours and win. Further, if the in-degree is 2 and there is at least one out-neighbour that is not sludge-win, the greens can force the sludge to go there and win by definition of not being sludge-win.  Finally, if the in-degree is 1, the sludge will be given 2 nodes to choose from. However, if there are at least two out-neighbours that are not sludge-win, the greens can ``present'' them to the sludge and regardless of the choice made by the sludge, the greens win.

We summarize now the implications of the fact that $v \in L_{si+1}$ is sludge-win. First of all, all nodes within distance $s-2$ are of in-degree 1. Nodes at the layer $L_{s(i+1)}$ below $v$ have in-degree at most 2. If $u \in L_{s(i+1)}$ has in-degree 2, then all of the $d-2$ out-neighbours are sludge-win. If $u \in L_{s(i+1)}$ has in-degree 1, then all out-neighbours except perhaps one node are sludge-win. Using this observation, we characterize a necessary condition for a node $v \in L_1$ to be sludge-win. For a given $v \in L_1$ that can be reached at time 1, we define a \emph{sludge-cut} to be the following cut: examine each node of $L_{si}$, and proceed inductively for $i \in \N^+$. If $u \in L_{si}$ has out-degree $d-1$, then we cut away any out-neighbour and all nodes that are not reachable from $v$ (after the out-neighbour is removed). The node that is cut away is called an \emph{avoided node}. After the whole layer $L_{si}$ is examined, we skip $s-1$ layers and move to the layer $L_{s(i+1)}$. We continue until we reach the sink, the layer $L_j=L_{si'}$ for some $i'$ (we stop at $L_j$ without cutting any further). The main observation is that if the sludge can win the game, then the following claim holds.

\bigskip

\noindent \emph{Claim}. There exists a node $v \in L_1$ and a sludge-cut such that the graph left after cutting is a $(d-1,d-2)$-regular graph, where each node at layer $L_{si}$, $1 \le i \le i'-1$ has out-degree $d-2$, and all other nodes have out-degree $d-1$. In particular, for any $1 \le i \le i'-1$ the graph induced by the set $\bigcup_{t=si}^{s(i+1)-1} L_t$ is a tree.

\bigskip

It remains to show that a.a.s.\ the claim does not hold.  (Since there are at most $d$ nodes in $L_1$ it is enough to show that a.a.s.\ the claim does not hold for a given node in $L_1$.) Fix $v \in L_1$. The number of avoided nodes at layer $L_{si+1}$ is at most the number of nodes in $L_{si}$ (after cutting earlier layers), which is at most
$$
\ell_i = (d-1)^{si-1} \left(\frac{d-2}{d-1} \right)^{i-1} = (d-1)^{(s-1)i}(d-2)^{i-1}.
$$
In particular, $\ell$, the number of nodes in the sink after cutting, is at most $\ell_{i'} \le n$. It can be shown that a.a.s.\ $\ell > n^{\alpha}$ for some $\alpha > 0$.

Fix $n^{\alpha} \le \ell \le \ell_{i'}\le n$. We need to show that for this given $\ell$ the claim does not hold with probability $1-o(n^{-1})$. Since each node in $L_{si'}$ has in-degree at most 2, the number of nodes in $L_{si'-1}$ is at most $2\ell$ (as before, after cutting). Since the graph between layer $L_{s(i'-1)}$ and $L_{si'-1}$ is a tree, the number of nodes in $L_{si'}$ is at most $2 \ell/(d-1)^{s-1}$, which is an upper bound for the number of avoided nodes at the next layer $L_{si'+1}$. Applying this observation recursively we obtain that the total number of avoided nodes up to layer $si'$ is at most
$
4 (d-1)^{-s+1} \ell .
$
To count the total number of sludge-cuts of a given graph, observe that each avoided node corresponds to one out of $d-1$ choices. Hence, the total number of sludge-cuts is at most \begin{equation}\label{eq:tree-cuts}
(d-1)^{4 (d-1)^{-s+1} \ell}.
\end{equation}

We now estimate the probability that the claim holds for a given $v \in L_1$ and a sludge-cut. To obtain an upper bound, we estimate the probability that all nodes in the layer $L_{si'-1}$ are of in-degree 1. Conditioning on the fact that we have $\ell$ nodes in the last layer, we find that the number of nodes in $L_{si'-1}$ is at least $\frac {\ell}{d-1}.$ Let $i'$ be large enough such that we are guaranteed by Lemma~\ref{lem:edges_conv} that the number of edges between the two consecutive layers is at least $c_0n(1-\eps/2)$. Hence, the probability that a node in $L_{si'-1}$ has in-degree 1 is at most
\begin{equation}\label{woo}
\left(1 - \frac {1}{n} \right)^{c_0n(1-\eps/2)} = (1+o(1)) e^{-c_0(1-\eps/2)}  \le e^{-c_0(1-\eps)} ,
\end{equation}
where $\eps>0$ can be arbitrarily small by taking $i'$ large enough. Let $p_{\eps}$ be the probability in (\ref{woo}). We derive that $j=si' \ge \log_{d-1} n + C'$, where $C'=C'(\eps,s)>0$ is a large enough constant. Conditioning under $v \in L_{si'-1}$ having in-degree $1$, it is harder for $v' \in L_{si'-1}$ to have in-degree $1$ than without this condition, as more edges remain to be distributed. Thus, the probability that all nodes in $L_{si'-1}$ have the desired in-degree is at most
\begin{equation}\label{eq:in-degree1}
p_\eps^{\frac {\ell}{d-1}} = \exp \left( -c_0 (1-\eps) \frac {\ell}{d-1} \right).
\end{equation}
Thus, by taking a union bound over all possible sludge-cuts (the upper bound for the number of them is given by~\eqref{eq:tree-cuts}), the probability that the claim holds is at most
$$
\left( (d-1)^{4 (d-1)^{-s+1}}
\left( e^{-c_0(1-\eps)} \right)^{\frac {1}{d-1}} \right)^{\ell}
$$
which can be made $o(n^{-1})$ by taking $\eps$ small enough, provided that $s$ is large enough so that
$$
(d-1)^{4 (d-1)^{-s+2} }
e^{-c_0} < 1.
$$
By considering the extreme case for the probability of having in-degree one when $d=3$ we obtain that
$$
e^{-c_0} \le e^{- \frac {0.895}{3}d} \le e^{-0.29d}
$$
for $d \ge 3$ (see Table~\ref{tab:small_values}). It is straightforward to see that $s \ge 4$ will work for any $d \ge 3$, and $s \ge 3$ for $d \ge 5$. \hfill $\Box$ \smallskip

\section{Proofs for random power law DAGs}

Let us recall that we have three parameters in this model: $\beta>2$, $d>0$, and $0 < \alpha < 1$. For a given set of parameters, we defined
$$
M=M(n)=n^{\alpha}, \mbox{\ \ \ }
i_0=i_0(n)=n \left(\frac{d}{M} \frac{\beta-2}{\beta-1}\right)^{\beta-1}, \mbox{\ and\ \ \ }
c=\left(\frac{\beta-2}{\beta-1}\right) dn^{\frac{1}{\beta-1}}.
$$
Finally, for $i \ge 1$ we have that
$$
w_i = c (i_0+i-1)^{-\frac{1}{\beta-1}}.
$$

Before we analyze the game for this model, let us focus on investigating some properties of the random graph we play on. We already mentioned that the sequence $(w_i)_{i \in \N}$ is decreasing but it is not obvious which weights we obtain for a given level $L_j$. We start by providing a lower bound for the weight of vertices in each layer $j$ (which will imply an upper bound for the previous layer $j-1$). Since the weight $w_i$ is a function of the index $i$, it is enough to focus on the latter. For $j \in \N$, let $\ell_j$ be the smallest index among the vertices of layer $L_j$. Using the notation introduced in Section~\ref{sec:definitions}, $\ell_j = d_{j-1}+1$. The maximum weight at $L_j$ is $w_{\ell_j}$, the minimum one is $w_{\ell_{j+1}-1}$.  The first lemma investigates the behaviour of $\ell_j$.

\begin{lemma}\label{lem:max_deg}
Let
$$
\gamma = d^{\beta-1} \left( \frac {\beta-2}{\beta-1} \right)^{\beta-2} \left( \left(1+ \left( d \frac{\beta-2}{\beta-1} \right)^{1-\beta} \right)^{\frac{\beta-2}{\beta-1}}-1\right)
$$
if $\frac {1}{\alpha} - \beta + 3 \in \N^+ \setminus \{1,2\}$, and $\gamma = 1,$ otherwise. Let
$$
\xi = \left(\frac {\beta-2}{\beta-1}\right) d \left( \left( \frac {d(\beta-2)}{\beta-1} \right)^{\beta-1} + 1 \right)^{-\frac {1}{\beta-1}}.
$$

The following holds a.a.s.
\begin{itemize}
\item [(i)] $\ell_0 = 1$, $\ell_1 = 2$, and $\ell_2 = (1+o(1))M$. \\
In particular, $w_{\ell_0} = M$, $w_{\ell_1} = (1+o(1)) M$, and
$$
w_{\ell_2} = (1+o(1))
\begin{cases}
n^{\alpha} & \text{ if } \alpha < \frac {1}{\beta} \\
\xi n^{\alpha} & \text{ if } \alpha = \frac {1}{\beta} \\
\left( \frac{\beta-2}{\beta-1} \right) d n^{\frac {1-\alpha}{\beta -1}} & \text{ if } \alpha > \frac {1}{\beta}.
\end{cases}
$$
\item [(ii)] For $3 \le j < \frac {1}{\alpha} - \beta + 3$ we have that
$$\ell_j = (1+o(1)) M^{j-1} = (1+o(1)) n^{\alpha(j-1)}.$$
In particular,
$$
w_{\ell_j} = (1+o(1))
\begin{cases}
n^{\alpha} & \text{ if } 3 \le j < \frac {1}{\alpha} - \beta + 2 \\
\xi n^{\alpha} & \text{ if } 3 \le j = \frac {1}{\alpha} - \beta + 2 \\
\left( \frac{\beta-2}{\beta-1} \right) d n^{\frac {1-\alpha(j-1)}{\beta -1}} & \text{ if } \frac {1}{\alpha} - \beta + 2 < j < \frac {1}{\alpha} - \beta + 3 \text{ and } j \ge 3.
\end{cases}
$$
\item [(iii)] If $j_0 = \frac {1}{\alpha} - \beta + 3 \in \N^+ \setminus \{1,2\}$, then,
$$\ell_{j_0} = (1+o(1)) \gamma M^{j_0-1} = (1+o(1)) \gamma n^{\alpha(j_0-1)} = \Theta(n^{\alpha(j_0-1)}).$$
In particular,
$$
w_{\ell_{j_0}} = (1+o(1)) \gamma^{-\frac {1}{\beta-1}} \left( \frac{\beta-2}{\beta-1} \right) d n^{\frac {1-\alpha(j_0-1)}{\beta -1}}.
$$
\item [(iv)] Let $j_1$ be the largest integer satisfying $j_1 \le \max\{ \frac {1}{\alpha} - \beta + 3, 2\}$. Let $j_2 = O(\log \log n)$ be the largest integer such that
$$
d^{\beta-1} \left(\frac{\gamma}{d^{\beta-1}} n^{\alpha(j_1-1)-1} \right)^{\left(\frac{\beta-2}{\beta-1}\right)^{j_2-j_1}} \le (\omega \log \log n)^{- \max\{ 2, \beta-2\}}.
$$
Then for $j_1 < j \le j_2$ we have that
$$
\ell_j = (1+o(1)) d^{\beta-1}n \left(\frac{\gamma}{d^{\beta-1}} n^{\alpha(j_1-1)-1} \right)^{\left(\frac{\beta-2}{\beta-1}\right)^{j-j_1}}.
$$
In particular, for $j_1 < j \le j_2$ we have that
$$
w_{\ell_j} = (1+o(1)) \left(\frac {\beta-2}{\beta-1} \right) \left(\frac{\gamma}{d^{\beta-1}} n^{\alpha(j_1-1)-1} \right)^{-\left(\frac{\beta-2}{\beta-1}\right)^{j-j_1}/(\beta-1)}.
$$
\item [(v)] For any $1 \leq j < j_2$, the number of edges between $L_{j-1}$ and $L_{j}$ is $(1+o(1))\ell_{j+1}$.�
\end{itemize}
\end{lemma}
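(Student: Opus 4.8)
The plan is to observe first that the number of edges between $L_{j-1}$ and $L_j$ is exactly
$$
e_{j-1}=\sum_{v_i\in L_{j-1}}\max\{w_i-\deg^-(v_i),0\}:
$$
every edge generated by a node of $L_{j-1}$ has its endpoint in $L_j$, and no other edges run between the two layers. Thus it suffices to prove $e_{j-1}=(1+o(1))\ell_{j+1}$, and I would bridge these two quantities through the intermediate layer size $|L_j|$ in two independent steps: (a) the $e_{j-1}$ edges create $(1+o(1))e_{j-1}$ distinct endpoints, so that $|L_j|=(1+o(1))e_{j-1}$; and (b) $\ell_{j+1}$ is dominated by $|L_j|$, so that $|L_j|=(1+o(1))\ell_{j+1}$.

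For step (a) I would first establish the a priori bound $e_{j-1}=o(n)$. Since the sequence $(w_i)$ is decreasing,
$$
e_{j-1}\le\sum_{i=1}^{\ell_j-1}w_i=O\left(n^{\frac{1}{\beta-1}}(i_0+\ell_j)^{\frac{\beta-2}{\beta-1}}\right),
$$
and because $j<j_2$ forces $\ell_j\le\ell_{j_2}=o(n)$ by the very definition of $j_2$ in part~(iv), while $i_0=\Theta(n^{1-\alpha(\beta-1)})=o(n)$, the right-hand side is $o\left(n^{\frac{1}{\beta-1}}\cdot n^{\frac{\beta-2}{\beta-1}}\right)=o(n)$. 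Conditioned on the graph up to $L_{j-1}$, and hence on the value of $e_{j-1}$, the endpoints are thrown independently and uniformly into the $n$ bins of $S$, so the number of repeated hits $e_{j-1}-|L_j|$ has expectation at most $e_{j-1}^2/n=o(e_{j-1})$. By Markov's inequality together with a union bound over the $O(\log\log n)$ relevant layers, a.a.s.\ $|L_j|=(1+o(1))e_{j-1}$ holds for all $j<j_2$ simultaneously.

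For step (b) note that $|L_j|=d_j-d_{j-1}=\ell_{j+1}-\ell_j$ holds deterministically, so it remains to show $\ell_j=o(\ell_{j+1})$. This I would read off the closed forms of parts~(i)--(iv): in the early regime $\ell_{j+1}/\ell_j=(1+o(1))n^{\alpha}\to\infty$, whereas in the regime of part~(iv)
$$
\frac{\ell_{j+1}}{\ell_j}=(1+o(1))\left(\frac{\gamma}{d^{\beta-1}}\,n^{\alpha(j_1-1)-1}\right)^{-\frac{1}{\beta-1}\left(\frac{\beta-2}{\beta-1}\right)^{j-j_1}},
$$
and since the base is $o(1)$ the exponent is negative; evaluating the logarithm even at the least favourable value $j=j_2-1$ and invoking the defining inequality for $j_2$ shows it is bounded below by a positive multiple of $\log(\omega\log\log n)\to\infty$, i.e.\ the ratio diverges. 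Hence $\ell_j/\ell_{j+1}\to0$ uniformly in the range, so $|L_j|=\ell_{j+1}(1-\ell_j/\ell_{j+1})=(1+o(1))\ell_{j+1}$. Combining (a) and (b) gives $e_{j-1}=(1+o(1))|L_j|=(1+o(1))\ell_{j+1}$, which is the claim.

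The part requiring the most care is the simultaneous control of both approximations in the top layers $j$ close to $j_2$. There $e_{j-1}$ is largest, of order $\ell_{j_2}$, only a polylogarithmic factor below $n$, so the collision rate $e_{j-1}/n$ is merely $o(1)$ by a polylogarithmic margin; at the same time the layer-to-layer growth ratio has degraded from $n^{\alpha}$ down to a polylogarithmic quantity. One must therefore check that the polylogarithmic slack deliberately inserted into the threshold defining $j_2$ in part~(iv) is strong enough to keep both the collision error and the ratio $\ell_j/\ell_{j+1}$ genuinely $o(1)$ in this worst case; this is exactly the role of the exponent $\max\{2,\beta-2\}$ there. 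Everything else is routine, since the heavy lifting---the explicit values of $\ell_j$---has already been carried out in parts~(i)--(iv).
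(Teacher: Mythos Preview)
Your argument is correct and arrives at the same conclusion as the paper, but via a slightly different route for the upper bound on $e_{j-1}$.

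Both you and the paper establish the lower bound identically: $e_{j-1}\ge |L_j|=\ell_{j+1}-\ell_j$, and then use the growth ratio $\ell_{j+1}/\ell_j\to\infty$ (read off from parts (i)--(iv)) to conclude $|L_j|=(1+o(1))\ell_{j+1}$. For the upper bound, however, the paper invokes the deterministic bound $e_{j-1}\le \bar\ell_j=\sum_{i=\ell_{j-1}}^{\ell_j-1}(w_i-1)$ and then verifies, by re-inspecting the integral computations in each of the three regimes ($\ell_j\ll i_0$, $\ell_j=\Theta(i_0)$, $\ell_j\gg i_0$), that $\bar\ell_j=(1+o(1))\ell_{j+1}$. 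You bypass this case analysis entirely: you argue directly that the number of collisions among the $e_{j-1}$ endpoints is $o(e_{j-1})$ in expectation (since $e_{j-1}=o(n)$), whence $e_{j-1}=(1+o(1))|L_j|$ a.a.s., and you are done.

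Your route is more modular---it uses only the \emph{values} of $\ell_j$ from (i)--(iv) as a black box, rather than the intermediate formulae for $\bar\ell_j$---and it avoids the three-case split. On the other hand, you are essentially re-deriving the collision estimate that already appears in the paper's proof of (ii)--(iv) (their inequality $|L_{j-1}|\ge\bar\ell_{j-1}-O(\bar\ell_{j-1}^2\omega\log\log n/n)$), so the paper's approach has the advantage of simply pointing back to work already done. Your acknowledgment that the exponent $\max\{2,\beta-2\}$ in the definition of $j_2$ is precisely what makes the Markov-plus-union-bound go through over all $O(\log\log n)$ layers is well observed; the paper relies on the same slack but leaves the verification implicit.
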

\begin{proof}
Clearly, we have $\ell_0=1$, $\ell_1=2$; (i) holds deterministically for $j=0,1$.  The number of vertices on levels $0$ and $1$ is at most $1+w_1=1+M$ but can be slightly smaller if there are some parallel edges (which happens a.a.s.\ if $\alpha>1/2$). We derive a deterministic upper bound for $\ell_2$ of $2+M$ but in fact, using the first moment method, we can show that a.a.s. $\ell_2=(1+o(1))M$. Indeed, the probability that a given vertex from $S$ has in-degree at least 2 is $(1+o(1)) {M \choose 2}/n^2$ so we expect $O(M^2/n)$ vertices of in-degree at least 2. With probability $1-O(1/(\omega \log \log n))$ we have $O(M^2 \omega \log \log n / n)$ of such vertices and so $\ell_2 \ge M (1-O(M \omega \log \log n / n))$. The statement for $j=2$ holds, and hence, part (i) follows.

Now, let us generalize this observation. Let $j \ge 3$ and suppose that $\ell_{j-1}$ is already estimated. Note that $\ell_j = \ell_{j-1} + |L_{j-1}|$ so it remains to estimate the size of $L_{j-1}$. We obtain that
\begin{eqnarray*}
|L_{j-1}| \le \ellb_{j-1} = \sum_{i=\ell_{j-2}}^{\ell_{j-1}-1} (w_i-1) &=& O(w_{\ell_{j-2}})+ O(\ell_{j-1})+ \int_{i=\ell_{j-2}}^{\ell_{j-1}} c(i_0+i-1)^{-\frac{1}{\beta-1}} di \\
&=& O(w_{\ell_{j-2}})+ O(\ell_{j-1}) + \int_{i=1}^{\ell_{j-1}} c(i_0+i-1)^{-\frac{1}{\beta-1}} di \\
&=& O(M)+ O(\ell_{j-1}) + c\left(\frac{\beta-1}{\beta-2}\right)\left((\ell_{j-1}+i_0)^{\frac{\beta-2} {\beta-1}}-i_0^{\frac{\beta-2}{\beta-1}} \right) \\
&=& O(\ell_{j-1}) +c\left(\frac{\beta-1}{\beta-2}\right)i_0^{\frac{\beta-2}{\beta-1}}\left( \left(1+\frac{\ell_{j-1}}{i_0}\right)^{\frac{\beta-2}{\beta-1}}-1\right).
\end{eqnarray*}
Note that  $\ellb_{j-1}$ is an upper bound for the number of edges between layer $L_{j-2}$ and $L_{j-1}$ (and so an upper bound for $|L_{j-1}|$), and we derive the equality $|L_{j-1}| = \ellb_{j-1}$ if all vertices in $L_{j-1}$ and $L_{j-2}$ have in-degree~1. Arguing as before, we deduce that with probability $1-O(1/(\omega \log \log n))$ the number of edges going to vertices in $L_{j-2}$ (in $L_{j-1}$) that are of degree at least 2 is $O(\ell_{j-1}^2 \omega \log \log n / n)$ ($O(\ellb_{j-1}^2 \omega \log \log n / n)$, respectively). Each edge of this type directed to a vertex in $L_{j-2}$ affects its out-degree, and so decreases the number of vertices in $L_{j-1}$ by at most one. Similarly, one edge going to a vertex in $L_{j-1}$ of in-degree at least 2 decreases by at most one the number of vertices of in-degree 1. Thus, with probability $1-O(1/(\omega \log \log n))$, by considering vertices of in-degree 1 only, we obtain that
\begin{eqnarray}
|L_{j-1}| &\ge& \ellb_{j-1} - O(\ellb_{j-1}^2 \omega \log \log n / n) - O(\ell_{j-1}^2 \omega \log \log n / n) \nonumber \\
&=& \ellb_{j-1} - O(\ellb_{j-1}^2 \omega \log \log n / n).\label{eq:num_edges}
\end{eqnarray}
(The last equality follows from the fact that $\ellb_{j-1} = \Omega(\ell_{j-1})$, provided that $w_{\ell_{j-1}} = \Omega(1)$. In fact, we consider values of $j$ at most $j_2$ for which it will be shown that $w_{\ell_{j-1}} \ge \omega$ and so $\ellb_{j-1} >  \ell_{j-1}$.) This, together with the fact that $\ell_j = \ell_{j-1} + |L_{j-1}|$, implies that
\begin{equation}\label{ar:equations}
\ell_j = O(\ell_{j-1})-O(\ellb_{j-1}^2 \omega \log \log n / n)+c\left(\frac{\beta-1}{\beta-2}\right)i_0^{\frac{\beta-2}{\beta-1}}\left( \left(1+\frac{\ell_{j-1}}{i_0}\right)^{\frac{\beta-2}{\beta-1}}-1\right).
\end{equation}

If $\ell_{j-1}=o(i_0)$, then
\begin{eqnarray*}
\ell_j &=& O(\ell_{j-1})+O(\ellb_{j-1}^2 \omega \log \log n / n) +c\left(\frac{\beta-1}{\beta-2}\right)i_0^{\frac{\beta-2}{\beta-1}}\left( \frac{\beta-2}{\beta-1}\frac{\ell_{j-1}}{i_0}+O \left(\frac{\ell_{j-1}}{i_0} \right)^2\right) \\
&=& O(\ell_{j-1})+O(\ellb_{j-1}^2 \omega \log \log n / n)+M \ell_{j-1} \left(1+O \left(\frac{\ell_{j-1}}{i_0}\right) \right) \\
&=& M\ell_{j-1} \left(1+O \left(\frac{\ell_{j-1}}{i_0}\right) \right) \left(1+O \left(M^{-1} \right) \right) \left(1+O \left(\frac {M \ell_{j-1} \omega \log \log n}{n} \right) \right).
\end{eqnarray*}
Note that $M^{j-2} = n^{(j-2)\alpha}$ and $i_0 = \Theta(n^{1-\alpha(\beta-1)})$. Therefore this recursive formula is to be applied $O(1)$ times only before the condition $\ell_{j-1} = o(i_0)$ fails (it may, of course, happen that it fails for $j=3$ so we do not apply it at all). We have that a.a.s.\ the statement holds for any value of $j$ such that $(j-2) \alpha < 1-\alpha(\beta-1)$; that is, $j < \frac {1}{\alpha} - \beta + 3.$ Moreover, the error term can be estimated much better; it is, in fact, $(1+O(n^{-\eps}))$ for some $\eps>0$. Let us note one more time that it may happen that  $\frac {1}{\alpha} - \beta \le 0$ and so the condition fails for $j=3$ but then (ii) trivially holds. Thus, part (ii) is finished.

For part (iii), suppose that $j_0 = \frac {1}{\alpha} - \beta + 3 \in \N^+ \setminus \{1,2\}$. Since our goal is to show that the statement holds a.a.s., we may assume that $\ell_{j_0-1} = (1+O(n^{-\eps})) M^{j_0-2}$ for some $\eps>0$. From the assumption it follows that $\ell_{j_0-1}$ and $i_0$ are of the same order. By the relations between $i_0$ and $M$, we have $M^{\beta-1}= n(d\frac{\beta-2}{\beta-1})^{\beta-1}/ i_0$. Thus, $\ell_{j_0-1}=(1+O(n^{-\eps})) M^{-(\beta-1)+\frac{1}{\alpha}}$, and hence, $\ell_{j_0-1} = (1+O(n^{-\eps})) \left( d \frac {\beta-2}{\beta-1} \right)^{1-\beta} i_0$.  It follows from~(\ref{ar:equations}) that a.a.s.
\begin{eqnarray*}
\ell_{j_0} &=& O(\ell_{j_0-1})+O(\ellb_{j_0-1}^2 \omega \log \log n / n)+c\left(\frac{\beta-1}{\beta-2}\right)i_0^{\frac{\beta-2}{\beta-1}}\left( \left(1+\frac{\ell_{j_0-1}}{i_0}\right)^{\frac{\beta-2}{\beta-1}}-1\right) \\
&=& (1+O(n^{-\eps})) c \frac{\beta-1}{\beta-2}i_0^{\frac{\beta-2}{\beta-1}}\left( \left(1+ \left( d \frac{\beta-2}{\beta-1} \right)^{1-\beta} \right)^{\frac{\beta-2}{\beta-1}}-1\right) \\
&=& (1+O(n^{-\eps})) \ell_{j_0-1} M d^{\beta-1} \left( \frac {\beta-2}{\beta-1} \right)^{\beta-2} \left( \left(1+ \left( d \frac{\beta-2}{\beta-1} \right)^{1-\beta} \right)^{\frac{\beta-2}{\beta-1}}-1\right),
\end{eqnarray*}
so (iii) holds.

For part (iv), let $j_1$ be the largest integer satisfying $j_1 \le \max\{ \frac {1}{\alpha} - \beta + 3, 2\}$. Based on earlier parts, we may assume that $\ell_{j_1} = (1+O(n^{-\eps})) \gamma M^{j_1-1}$. Note that $\ell_{j_1} / i_0 = \Omega(n^{\eps})$ for some $\eps >0$, and so $\ell_{j-1}  / i_0 = \Omega(n^{\eps})$ for any $j_1 < j \leq j_2$, since $\ell_j$ is monotonic as a function of $j$.

Fix $j > j_1$. This time we derive from~(\ref{ar:equations}) that with probability $1-O(1/(\omega \log \log n))$\begin{eqnarray*}
\ell_{j} &=& O(\ell_{j-1})+O(\ellb_{j-1}^2 \omega \log \log n / n)+c\left(\frac{\beta-1}{\beta-2}\right)i_0^{\frac{\beta-2}{\beta-1}}\left( \left(1+\frac{\ell_{j-1}}{i_0}\right)^{\frac{\beta-2}{\beta-1}}-1\right) \\
&=& O(\ell_{j-1})+O(\ellb_{j-1}^2 \omega \log \log n / n)+ c\left(\frac{\beta-1}{\beta-2}\right) i_0^{\frac{\beta-2}{\beta-1}} \left(\frac{\ell_{j-1}}{i_0}\right)^{\frac{\beta-2}{\beta-1}} (1+O(n^{-\eps})) \\
&=& O(\ellb_{j-1}^2 \omega \log \log n / n)+ c\left(\frac{\beta-1}{\beta-2}\right) \ell_{j-1}^{\frac{\beta-2}{\beta-1}} (1+O(n^{-\eps})) (1+O(c^{-1} \ell_{j-1}^{\frac{1}{\beta-1}})) \\
&=& O(\ellb_{j-1}^2 \omega \log \log n / n)+ c\left(\frac{\beta-1}{\beta-2}\right) \ell_{j-1}^{\frac{\beta-2}{\beta-1}} (1+O(n^{-\eps})) (1+O((\ell_{j-1}/n)^{\frac{1}{\beta-1}})) \\
&=& (1+O(1/(\omega \log \log n))) c\left(\frac{\beta-1}{\beta-2}\right) \ell_{j-1}^{\frac{\beta-2}{\beta-1}},
\end{eqnarray*}
provided that
\begin{eqnarray*}
c \left(\frac {\beta-1}{\beta-2}\right) \ell_{j-1}^{\frac{\beta-2}{\beta-1}} (\omega \log \log n) / n &\le& (\omega \log \log n)^{-1}, \mbox{\ \ \and}\\
\ell_{j-1}/n &\le& \left( \frac {2}{d} \right)^{\frac {\beta-1}{\beta-2}} (\omega \log \log n)^{-(\beta-1)}.
\end{eqnarray*}
(Note that we have $\ellb_{j-1}=O(c \ell_{j-1}^{\frac{\beta-2}{\beta-1}})$, and thus we obtain the first condition, coming from the term $O(\ellb_{j-1}^2 \omega \log \log n / n)$.) The first condition is equivalent to $\ell_{j-1}/n \le \left( \frac {2}{d} \right)^{\frac {\beta-1}{\beta-2}} (\omega \log \log n)^{-2 \frac{\beta-1}{\beta-2}}$, and so both conditions combined together are equivalent to
$$
\ell_{j-1}/n \le \left( \frac {2}{d} \right)^{\frac {\beta-1}{\beta-2}} (\omega \log \log n)^{- \max\{ 2 \frac{\beta-1}{\beta-2}, \beta -1\}}.
$$
If this condition is satisfied, then we obtain that $\ell_j = (1+o(1)) n^{\frac{1}{\beta-1}}\ell_{j-1}^{\frac {\beta-2}{\beta-1}} d$, or equivalently $\ell_j/n =  (1+o(1)) (\ell_{j-1}/n)^{\frac {\beta-2}{\beta-1}}d$. By using the condition on $\ell_{j-1}/n$, we obtain the following slightly stronger condition (where we ignore the factor of $2$) for $\ell_j$:
\begin{equation}\label{eq:cond}
\ell_{j}/n \le (\omega \log \log n)^{- \max\{ 2, \beta-2\}}.
\end{equation}

Now, suppose that (\ref{eq:cond}) is satisfied, and we rewrite the relation between $\ell_j$ and $\ell_{j-1}$ using the fact that $c=\frac{\beta-2}{\beta-1}dn^{\frac{1}{\beta-1}}$:
$$
\frac{\ell_j }{d^{\beta-1}n} = \left( (1+O(1/(\omega \log \log n))) \frac{\ell_{j-1}}{d^{\beta-1}n} \right)^{\frac{\beta-2}{\beta-1}} =  (1+O(1/(\omega \log \log n))) \left( \frac{\ell_{j-1}}{d^{\beta-1}n} \right)^{\frac{\beta-2}{\beta-1}}.
$$
Applying this argument recursively, we obtain that
$$
\ell_j = d^{\beta-1}n (1+O(1/(\omega \log \log n)))^j \left(\frac{\ell_{j_1}}{d^{\beta-1}n} \right)^{\left(\frac{\beta-2}{\beta-1}\right)^{j-j_1}}.
$$
Finally, since we will soon show that $j \le j_2 = O(\log \log n)$ we derive by the previous cases that
$$
\ell_j = (1+o(1)) d^{\beta-1}n \left(\frac{\gamma M^{j_1-1}}{d^{\beta-1}n} \right)^{\left(\frac{\beta-2}{\beta-1}\right)^{j-j_1}} = (1+o(1)) d^{\beta-1}n \left(\frac{\gamma}{d^{\beta-1}} n^{\alpha(j_1-1)-1} \right)^{\left(\frac{\beta-2}{\beta-1}\right)^{j-j_1}}.
$$
Indeed, since
$$
\ell_j = d^{\beta-1} n \exp \left(  (1+o(1)) \left(\frac{\beta-2}{\beta-1}\right)^{j-j_1} (\alpha(j_1-1)-1) \log n \right),
$$
the condition (\ref{eq:cond}) fails for $j = C \log \log n$ (by taking $C>0$ large enough), and item (iv) follows.

Finally, the proof of part (v) follows now by inspecting closely the parts (ii), (iii), and (iv). In each case, $\ell_{j+1}-\ell_j$ is the size of $L_j$ and it follows from earlier parts that $\ell_{j+1}-\ell_j = (1+o(1)) \ell_{j+1}$. This is clearly a lower bound for the number of edges we try to estimate. On the other hand, $\ellb_j$ serves as an upper bound. Hence, it remains to show that $\ellb_j = (1+o(1)) \ell_{j+1}$.
If $j+1 < \frac{1}{\alpha}-\beta+3$, then $\ell_j=o(i_0)$. By looking at part (ii), we see that the leading term of both $\ellb_{j}$ and $\ell_{j+1}$ is $(1+o(1))c\frac{\beta-2}{\beta-1}i_0^{\frac{\beta-2}{\beta-1}}\left(\frac{\beta-2}{\beta-1}\frac{\ell_{j}}{i_0}+O(\frac{\ell_j}{i_0})^2\right)$, and the result follows for this case (alternatively, in this case we can also observe $\ell_j=(1+o(1))n^{\alpha(j-1)}$, $\ell_{j+1}=(1+o(1))n^{\alpha j}$, and by the trivial bound on the degree, $\ellb_j \leq \ell_j M = (1+o(1))n^{\alpha j}$ holds).
 Next, if $j+1 = \frac{1}{\alpha}-\beta+3$, then $\ell_j=\Theta(i_0)$, and by the calculations of part (iii), in both $\ell_{j+1}$ and in $\ellb_j$ the leading term is of order $(1+o(1))c\frac{\beta-1}{\beta-2}i_0^{\frac{\beta-2}{\beta-1}}\left( \left(1+\frac{\ell_{j}}{i_0}\right)^{\frac{\beta-2}{\beta-1}}-1\right)$, and the result follows also for this case. Finally, if $j+1 > \frac{1}{\alpha}-\beta+3$, then $\ell_j=\omega(i_0)$, $\ellb_{j}=(1+o(1))c\frac{\beta-1}{\beta-2}\ell_{j}^{\frac{\beta-2}{\beta-1}}$, and as observed in part (iv), $\ell_{j+1}=(1+o(1))n^{\frac{1}{\beta-1}}\ell_{j}^{\frac{\beta-2}{\beta-1}}d$, and thus $\ellb_{j}=(1+o(1))\ell_{j+1}$, and part (v) follows.
\end{proof}

We are now ready to come back to investigating the green number. We provide some obvious bounds for the green number and after that we sketch the idea that could be used to estimate it precisely. However, we do not perform these calculations rigorously, since the approach is rather delicate.

\begin{lemma}\label{lem:green_power_law}
Let $\gamma$ and $j_1$ be defined as in Lemma~\ref{lem:max_deg}. That is, let
$$
\gamma = d^{\beta-1} \left( \frac {\beta-2}{\beta-1} \right)^{\beta-2} \left( \left(1+ \left( d \frac{\beta-2}{\beta-1} \right)^{1-\beta} \right)^{\frac{\beta-2}{\beta-1}}-1\right)
$$
if $\frac {1}{\alpha} - \beta + 3 \in \N^+ \setminus \{1,2\}$, and $\gamma = 1$ otherwise. Let $j_1$ be the largest integer satisfying $j_1 \le \max\{ \frac {1}{\alpha} - \beta + 3,2\}$. Moreover, let $j_2 = O(\log \log n)$ be the largest integer such that
$$
d^{\beta-1} \left(\frac{\gamma}{d^{\beta-1}} n^{\alpha(j_1-1)-1} \right)^{\left(\frac{\beta-2}{\beta-1}\right)^{j_2-j_1}} \le (\omega \log \log n)^{- \max\{ 2, (\beta-1)^2\}}.
$$
Then, for $1 \leq  j \le j_2-1$ we have that a.a.s.
$$
(1-o(1)) w_{\ell_{j}} \le g_j \le w_{\ell_{j-1}}.
$$
\end{lemma}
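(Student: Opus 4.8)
Throughout I work on the a.a.s.\ event furnished by Lemma~\ref{lem:max_deg}, on which the indices $\ell_t$, the weights $w_{\ell_t}$, the layer sizes $|L_t|=\ell_{t+1}-\ell_t$, and the inter-layer edge counts all take their stated values. Two consequences are used repeatedly. Since $\mathbf{w}$ is decreasing and the indices occurring in $L_t$ fill the interval $[\ell_t,\ell_{t+1}-1]$, every vertex of $L_t$ has out-degree at most $w_{\ell_t}$, while a vertex of $L_t$ of in-degree $O(1)$ has out-degree $w_i-\deg^-\ge(1+o(1))w_{\ell_{t+1}}-O(1)=(1-o(1))w_{\ell_{t+1}}$, where I use $w_{\ell_{t+1}}\ge w_{\ell_j}\ge\om\to\infty$ for $t+1\le j\le j_2-1$. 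Moreover, by part~(v) the typical in-degree is $1$, so vertices of large in-degree form a $o(1)$-fraction of each layer; the sludge will avoid them, and any genuine merging of paths only increases his options. For the upper bound I let the greens play \emph{wait and block}: they protect arbitrary vertices until the first step at which the sludge occupies some $u\in L_{j-1}$, and at that step they protect every out-neighbour of $u$ in $L_j$. Their number is at most the number of edges leaving $u$, hence at most $w_i\le w_{\ell_{j-1}}$; so for any $c>w_{\ell_{j-1}}$ the step budget $c_j\ge\lfloor c\rfloor\ge\deg^+(u)$ suffices, the sludge has no unprotected move into $L_j$, and $g_j\le w_{\ell_{j-1}}$.

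For the lower bound, fix $\eps\in(0,1)$, set $c=(1-\eps)w_{\ell_j}$, and exhibit a winning strategy for the sludge; letting $\eps\to0$ then yields $g_j\ge(1-o(1))w_{\ell_j}$. The sludge descends, at each step moving to a surviving out-neighbour of small in-degree. By the first remark, every vertex he occupies at a level $t\le j-1$ has out-degree $(1-o(1))w_{\ell_j}>c$, so the greens can never seal him off using only the $c_t\le c+1$ vertices protected during the single step he sits there: to win they must \emph{pre-protect}. Precisely, if the sludge terminates at $u\in L_{j-1}$, the greens must have protected at least $\deg^+(u)-c_j\ge(\eps-o(1))w_{\ell_j}$ of $u$'s out-neighbours before his final move. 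The crux is that the sludge retains enough freedom in his choice of $u$ that the greens cannot prepare all candidates at once.

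When the sludge reaches his penultimate vertex $x\in L_{j-2}$, the number of its out-neighbours in $L_{j-1}$ (of small in-degree) that survive the greens' protection is at least $(1-o(1))\eps\,w_{\ell_{j-1}}$: at most $c+1$ are removed at the step he occupies $x$, and a timing estimate shows that earlier the greens knew $x$ only up to its $\Theta(w_{\ell_{j-2}})$ siblings, so within the budget of the few relevant steps they can have pre-protected only $O(c/w_{\ell_{j-2}})=O(1)$ of $x$'s out-neighbours. Now a global count finishes the argument. The greens' entire budget is $\lfloor cj\rfloor=O(w_{\ell_j}\log\log n)$. If each of the $\ge(1-o(1))\eps\,w_{\ell_{j-1}}$ surviving children of $x$ had at least $(\eps/2)w_{\ell_j}$ pre-protected out-neighbours, the greens would have spent at least $(1-o(1))(\eps^2/2)\,w_{\ell_{j-1}}w_{\ell_j}$ protections, exceeding their budget because $w_{\ell_{j-1}}\gg\log\log n$ for all $1\le j\le j_2-1$: in the flat range $j<\tfrac1\alpha-\beta+2$ one has $w_{\ell_{j-1}}=\Theta(n^{\alpha})$, and in the deep range the exponent $\max\{2,(\beta-1)^2\}$ in the definition of $j_2$ is chosen exactly so that $w_{\ell_{j-1}}\ge(\om\log\log n)^{\Omega(1)}$. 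Hence some surviving child $u$ carries fewer than $(\eps/2)w_{\ell_j}$ pre-protected out-neighbours; the sludge moves to $u$, the greens' last move protects at most $c+1$ more, and since $(\eps/2)w_{\ell_j}+c+1<(1-o(1))w_{\ell_j}=\deg^+(u)$ for $n$ large, an unprotected vertex of $L_j$ remains for him to enter.

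The main obstacle is exactly this pre-protection bookkeeping: because the greens are adaptive I must rule out their concentrating protections on $u$'s out-neighbours before the sludge has committed to $u$. This is genuinely delicate in the flat regime, where all relevant weights are $\Theta(n^\alpha)$ and the total budget $cj$ \emph{exceeds} the per-layer branching, so that the crude bound ``at most $cj$ children can be protected'' says nothing and the timing/branching estimate of the previous paragraph becomes essential. The remaining points are routine: transferring the out-degree and in-degree estimates of Lemma~\ref{lem:max_deg} into the deep layers where merges are common (these only help the sludge), and verifying $w_{\ell_{j-1}}w_{\ell_j}\gg cj$ uniformly in $1\le j\le j_2-1$.
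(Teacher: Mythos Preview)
Your upper bound matches the paper exactly. The lower bound, however, has a genuine gap in the global counting step. You assert that if each of the $\Theta(\eps\,w_{\ell_{j-1}})$ surviving children of $x$ carries at least $(\eps/2)w_{\ell_j}$ pre-protected out-neighbours, then the greens have spent $\Theta(\eps^2 w_{\ell_{j-1}}w_{\ell_j})$ protections. This multiplication is valid only if a single protected vertex $u\in L_j$ is an out-neighbour of at most $O(1)$ of those children; otherwise one protection is counted many times and the budget comparison collapses. You dismiss this with ``merging of paths only increases his options'', but merging at level $L_j$ works \emph{for} the greens in this count, not against them: one well-placed protection can simultaneously deplete the option sets of many candidate $u$'s. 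The paper isolates exactly this issue and proves, as a separate claim, that for any $v$ and any $u$ two layers below, the number of directed $v$--$u$ paths is bounded by an absolute constant $K$ (via a union bound over all pairs and all $O(\log\log n)$ levels). With that claim your counting would go through with a harmless extra factor $1/K$; without it, it does not.

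The paper's treatment of the shallow regime $2\le j\le j_1-1$ is also structurally different from yours and sidesteps the ``timing/branching estimate'' you yourself flag as delicate. There the paper shows, via Chernoff, that a.a.s.\ every vertex has a $(1-o(1))$-fraction of out-neighbours of in-degree exactly~$1$; it then hands all in-degree~$\ge 2$ vertices to the greens for free, leaving a genuine tree of minimum out-degree $(1-o(1))w_{\ell_j}$ on which greedy protection is provably optimal for the greens. On that tree the sludge always retains $\ge(\eps-o(1))w_{\ell_j}$ unprotected children, and no pre-protection bookkeeping is needed at all. In the deep regime the paper uses the much stronger gap $w_{\ell_{j-1}}=\omega(w_{\ell_j}\log\log n)$, so that the \emph{entire} game budget $O(c\log\log n)$ is $o(w_{\ell_{j-1}})$; combined with the constant-$K$ path bound this lets the sludge choose, at every step up to $L_{j-1}$, a child none of whose out-neighbours is protected. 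Your ``timing estimate'' $O(c/w_{\ell_{j-2}})=O(1)$ is neither stated precisely nor proved, and in the flat regime (all weights $\Theta(n^\alpha)$, total budget exceeding the branching at a single vertex) it is not clear it can be made rigorous without something equivalent to the tree reduction.
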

Note that the definition of $j_2$ in Lemma~\ref{lem:green_power_law} is slightly modified compared to the one from Lemma~\ref{lem:max_deg}. However there is only a $O(1)$ difference between these values; in this case it is smaller.

\begin{proof}[Proof of Lemma~\ref{lem:green_power_law}]
Fix $1 \leq j \le j_2-1$ and suppose that the game is played with the sink $L_j$. Since the maximum total degree (and thus, the maximum out-degree) of vertices in $L_{j-1}$ is at most $w_{\ell_{j-1}}$, the greens can easily win when the game is played with parameter $c=w_{\ell_{j-1}}$. They can play arbitrarily at the beginning of the game when the sludge is moving towards the sink. Once he reaches a vertex $u \in L_{j-1}$, the greens can block all out-neighbours and the game ends. We obtain that $g_j \le w_{\ell_{j-1}}$.

In order to derive a lower bound, we will need the following property that follows directly from the proof of Lemma~\ref{lem:max_deg} and holds a.a.s. Let us note that for $j_1 < j \le j_2$ we have that
\begin{eqnarray}\label{eq:cond_ell}
\ell_j &=& (1+o(1)) \ell_{j-1} d \left( \frac {n}{\ell_{j-1}} \right)^{\frac {1}{\beta-1}} \nonumber \\
&\ge& (1+o(1)) \ell_{j-1} d (\omega \log \log n)^{\frac {\max \{2, (\beta-1)^2 \}}{\beta-1}}.
\end{eqnarray}
For $j \le j_1$ we have  $\ell_j = \Omega(\ell_{j-1} n^{\alpha})$ so in fact~(\ref{eq:cond_ell}) holds for any $j \le j_2$.

Now let us play the game with parameter $c=w_{\ell_{j}} (1-\eps)$ for some $\eps >0$. We will show that a.a.s.\ the sludge can win the game, independently of the strategy of the greens. This will prove that $g_j \ge w_{\ell_{j}} (1-\eps)$ a.a.s.\ and the result will hold after taking $\eps \to 0$. If $j=1$, then for any $\alpha \in (0,1)$, a.a.s.  $|L_1|=M(1+o(1))=w_{\ell_1}(1+o(1))$, and thus, the greens clearly cannot win the game by protecting $w_{\ell_1}(1-\eps) = M(1-\eps)(1+o(1))$ vertices. Hence, we may assume that $j \geq 2$.

Suppose first that $\alpha \leq \frac{1}{\beta}$ and $2 \leq j \leq j_1-1$. Since in this case $j \leq \frac{1}{\alpha}-\beta+3$, by the formulas for $\ell_j$ given by Lemma~\ref{lem:max_deg}, for any $2 \leq j \leq j_1-1$, $\ell_j = \Theta(n^{\alpha(j-1)})$. Moreover, by part (v) of Lemma~\ref{lem:max_deg}, the number of edges between $L_{j-1}$ and $L_j$ is at most  $(1+o(1))\ell_{j+1}= O(n^{1-\alpha \beta+2\alpha}) = O(n^{1-\eps_0})$, with $\eps_0 = \alpha (\beta-2) > 0$. Hence, for any vertex $v \in L_{j}$,
$$
\Prob(\deg^-(v) \geq 2) \le (1+o(1)) {\ell_{j+1} \choose 2} \left( \frac 1n \right)^2 = O(n^{-2\eps_0}) \leq n^{-\eps_0} \leq n^{-\eps_1},
$$
where $\eps_1=\min \{\eps_0,\alpha/2\} > 0.$
Denoting by $B_v$ the number of out-neighbours of $v$ with in-degree $2$ or more, we have that $\E(B_v) \leq n^{\alpha-\eps_1}$. Since $O(n^{1-\eps_0})$ is a fixed upper bound on the number of edges between two consecutive layers, for any two vertices $v,v' \in L_{j}$, $\Prob(\deg^-(v) \geq 2 ~~|~~ \deg^-(v') \geq 2) \leq \Prob(\deg^-(v) \geq 2) \leq n^{-\eps_1}$.

Consider now another stochastic process in which each vertex $v \in L_{j}$, has exactly $n^{\alpha}$ out-neighbours, and for each out-neighbour $w$ of $v$, independently of all other vertices, $\Prob(\deg^-(w) \geq 2) = n^{-\eps_1}$. Denote by $B'_v$ the number of out-neighbours of $v$ with in-degree $2$ or more in this new stochastic process. Clearly, $\E(B'_v)=n^{\alpha-\eps_1}�\geq n^{\alpha/2}$ for any $v$. Furthermore, by the previous observation of negative correlation between vertices of in-degree $2$ or more (in the original process),  $\Prob(B_v \geq x) \leq \Prob(B'_v \geq x)$ for any $x \geq 0$. Set $\delta$ to be a sufficiently small constant. Then, by Lemma~\ref{lem:Chernoff}, for some $c>0$ we have that $$\Prob(B_v \geq (1+\delta)\E(B'_v)) \leq \Prob(B'_v \geq (1+\delta)\E(B'_v)) \leq  e^{-n^{c}}.$$  By taking a union bound over all $O(n)$ vertices of the first $j=O(1)$ layers, a.a.s.\ all vertices have at least a $1-o(1)$ fraction of out-neighbours with in-degree $1$.

Now, in order to show a lower bound on the green number, we can assume that all vertices with in-degree $2$ or more are already protected by the greens in the very beginning (they are cut away from top to bottom together with the subgraphs pending at them), and thus, the sludge is playing on the remaining graph that is a tree. We showed that a.a.s.\ the minimum degree in the remaining tree is at least $(1-o(1))w_{\ell_j}.$ Observe that in a tree the best strategy for the greens is always to protect neighbours of the vertex currently occupied by the sludge. Indeed, if they protect a vertex at distance $2$ or more from the vertex occupied by the sludge, they can consider the path between the vertex occupied by the sludge and the vertex originally protected, and instead protect the unique out-neighbour of the vertex occupied by the sludge. Clearly, this is at least as good move as the original one. Since the greens have only $w_{\ell_j}(1-\eps)$ at their disposal, in each round at least $\eps w_{\ell_j} - o(w_{\ell_j})$ neighbours remain unprotected, and the sludge can go to any of these, and finally reach the sink.

Suppose now that $\alpha > \frac{1}{\beta}$ or $j_1-1 < j \leq j_2-1$. For any $j_1 < j \leq j_2-1$ we have
\begin{eqnarray}\label{eq:cond_w}
w_{\ell_{j-1}} &=& (1+o(1)) c \ell_{j-1}^{-\frac {1}{\beta-1}} \ge (1+o(1)) c \ell_{j}^{-\frac {1}{\beta-1}} d^{\frac {1}{\beta-1}} (\omega \log \log n)^{\frac {\max \{2, (\beta-1)^2 \}}{(\beta-1)^2}} \nonumber \\
&\ge& (1+o(1)) w_{\ell_{j}} d^{\frac {1}{\beta-1}} (\omega \log \log n).
\end{eqnarray}
Moreover, note that the formula is true if $j> j_1-1$, but $j \leq j_1$, and also in the case $\alpha > \frac{1}{\beta}$ we have $j_1=2$, and $w_{\ell_1}=\Omega(w_{\ell_2}n^{\delta}) $ for some $\delta > 0$. Thus, combining these statements, for any $j_1-1 < j \le j_2-1$, or any $2 \leq j \leq j_2-1$ in the case $\alpha > \frac{1}{\beta}$, $w_{\ell_j} = o(w_{\ell_{j-1}} / \log \log n)$. Since $(w_i)_{i \geq 0}$ is a monotonically decreasing sequence, any vertex up to (and including) layer $L_{j-2}$ has weight at least $w_{\ell_{j-1}}=\omega(w_{\ell_j} \log \log n)=\omega(c \log \log n).$

We will provide a strategy for the sludge and show that it guarantees him win a.a.s., provided that the game is played with parameter $c = w_{\ell_{j}} (1-\eps)$, as before. The strategy is straightforward; in particular, he always goes to any non-protected vertex $u$ with the property that no out-neighbour of $u$ is protected. Note that the total number of vertices protected at the end of the game is $O(c j_2) = O(c \log \log n)$. Moreover, each protected vertex can only eliminate this vertex or its parents. The number of parents of a given vertex $u$, the in-degree of $u$, can be large but these parents are ``scattered'' across the whole layer, as we will show in the following claim.

\bigskip
\noindent \emph{Claim.} The following holds a.a.s. The number of paths from any vertex $v$ (a vertex possibly occupied by the sludge) and vertex $u$ two layers below (a vertex possibly protected by the greens) is bounded by some universal constant $K$.

\smallskip
\emph{Proof of the claim.} Fix $\eps > 0$ to be an arbitrarily small constant. Suppose first that $j$  is  such that $w_{\ell_{j-2}} < n^{\frac12 - \eps}$. Then, by monotonicity of $w_{\ell_j}$, the number of directed paths of length two starting at $v \in L_{j-2}$ is at most $(n^{\frac12-\eps})^2=n^{1-2\eps}$. Thus, the probability that for a given vertex $u \in L_j$ there are $K$ paths of length two starting from $v$, is at most $\binom{n^{1-2\eps}}{K}(\frac{1}{n})^{K}$. By taking a union bound over all $n$ vertices $u$ from $S \supseteq L_j$ and all starting vertices $v$ (there are at most $n$ of them), we see that $n^2 \binom{n^{1-2\eps}}{K}(\frac{1}{n})^{K}=o(1/\log \log n)$ for a sufficiently large constant $K$. The claim then holds for this range of values of $j$, by taking a union bound over $O(\log \log n)$ possible values of $j$.

On the other hand, if $j$ is such that $w_{\ell_{j-2}} \geq n^{\frac12 - \eps}$, then by the formulas for $w_{\ell_j}$ given in Lemma~\ref{lem:max_deg} we see that $j-2=O(1)$ and so $j+1=O(1)$ as well. (Indeed, the exponent of $n$ in the formula for $w_{\ell_{j-2}}$ in Lemma~\ref{lem:max_deg}(iv) can be made arbitrarily small by taking a sufficiently large constant $j-2$.) Since $j+1=O(1)$, it follows from Lemma~\ref{lem:max_deg} that $\ell_{j+1} \leq n^{1-\eps_0}$ for some $\eps_0 > 0$, and as shown in part (v) of Lemma~\ref{lem:max_deg}, the number of edges between $L_{j-1}$ and $L_j$  is at most $(1+o(1))\ell_{j+1}=O(n^{1-\eps_0}).$ Thus, the number of paths of length two starting at $v \in L_{j-2}$ is at most $O(n^{1-\eps_0})$. Hence, as before, the probability that for a given vertex $u \in L_j$ there are $K$ paths of length two starting from $v$, is at most $\binom{O(n^{1-\eps_0})}{K}(\frac{1}{n})^{K}$, and as before, by taking a union bound over all $n^2$ pairs of vertices $u,v$ and over all $j=O(1)$, for $K$ sufficiently large, $n^2 \binom{O(n^{1-\eps_0})}{K}(\frac{1}{n})^{K}=o(1)$, and the claim follows.

\bigskip

Hence, by the claim, we obtain that the number of eliminated vertices is still $O(c \log \log n)$. Finally, since the degree of each vertex in layers up to and including the layer $L_{j-2}$ is $\omega( c \log \log n)$, the sludge can easily reach the layer $L_{j-1}$. Since the minimum degree in this layer is $(1+o(1))w_{\ell_j}$ and the game is played with parameter $c=w_{\ell_{j}} (1-\eps)$', no matter what the greens do in this very last move, the sludge reaches the sink. Thus, $g_j \ge w_{\ell_{j}} (1-\eps)$ a.a.s. As we already mentioned, the result follows by taking $\eps$ tending to zero.
\end{proof}

Theorem~\ref{thm:main_power-law} follows immediately from Lemma~\ref{lem:max_deg} and Lemma~\ref{lem:green_power_law}.

\bigskip

We finish by remarking on how we can try to close the gap in the previous lemma.  It follows from Lemma~\ref{lem:green_power_law} that for $1 \leq  j \le j_2-1$ we have that a.a.s.\ $(1-o(1)) w_{\ell_{j}} \le g_j \le w_{\ell_{j-1}}$. Suppose then that the game is played with parameter $c$ such that $(1+\eps) w_{\ell_{j}} \le c \le (1-\eps) w_{\ell_{j-1}}$ for some $\eps > 0$. Clearly, the sludge tries to stay on vertices with as small label as possible (that is, the largest possible total degree). The greens aim for the opposite, they want the sludge to go to large labels (the smallest total degree). In the first round, the sludge is guaranteed to be able to go to a vertex with label at most $c+2$ and, in fact, the greens can force him to go to $v_{c+2}$. In the next round, it might be the case that there are some ``shortcuts'' to vertices of degree at least 2 with small labels but, since the number of such edges is very small, the greens can easily prevent the sludge from using these edges. After securing these edges, the greens should protect the remaining neighbours of $v_{c+2}$ with small labels. However, this time this does not help much. The sludge is forced to (but also is able to) go to a vertex  whose label is
$$
(1+o(1)) \sum_{i=1}^{c+2} (w_i-1) + O(c) = (1+o(1)) \sum_{i=1}^{c+2} w_i.
$$
Repeating this argument, and the calculations performed in the proof of Lemma~\ref{lem:max_deg}, we can compute the position of the sludge at time $j-1$ and based on that we can decide if he wins or looses this game. Optimizing this with respect to the parameter $c$ would yield the asymptotic value of $g_j$.

\section{Further directions}\label{directions}

We considered Seepage played on regular DAGs in Theorem~\ref{thm:main_d-reg}, and in power law DAGs in Theorem~\ref{thm:main_power-law}. It would be interesting to analyze the game on random DAGs with other degree sequences; for example, where the degree distribution remains the same at each level, or there are the same number of vertices at each level. While our emphasis was on asymptotic results for the green number in random DAGs, our results could be complemented by an analysis (via simulations) of the green number on small DAGs, say up to 100 vertices. We will consider such an approach in future work. Finally, hierarchical social networks are not usually strictly acyclic; for example, on Twitter, directed cycles of followers may occur. Seepage was defined in \cite{CFFMN} for DAGs, but it naturally extends to the setting with directed cycles (here, the directed graphs considered must have at least one source and a set of sinks; the game is then played analogously as before). A next step would be to extend our results, if possible, to a setting where such cycles occur, and analyze the green number on, say, their strongly connected components. One question is to determine if the green number change as a function of the number of backward edges.

\section{Acknowledgements} We would like to thank the anonymous referees for suggestions which improved the paper.


\begin{thebibliography}{99}

\bibitem{almen} J.A.\ Almendral, L.\ L\'{o}pez, and M.A.F.\ Sanju\'{a}n, Information flow in generalized hierarchical networks, \emph{Physica A} \textbf{324} (2003) 424--429.


\bibitem{bonato} A.\ Bonato, \emph{A Course on the Web Graph}, Graduate Studies in Mathematics Series, American Mathematical Society, Providence, Rhode Island, 2008.

\bibitem{AB1} A.~Bonato and R.J.\ Nowakowski, \emph{The Game of Cops and Robbers on Graphs}, American Mathematical Society, Providence, Rhode Island, 2011.

\bibitem{chayes} J.T.\ Chayes, B.\ Bollob\'{a}s, C.\ Borgs, and O.\ Riordan, Directed scale-free graphs, In: \emph{Proceedings of the 14th Annual ACM-SIAM Symposium on Discrete Algorithms}, 2003.

\bibitem{clbook} F.R.K.\ Chung and L.\ Lu, \emph{Complex graphs and networks}, American Mathematical Society, Providence RI, 2006.

\bibitem{CFFMN} N.E.\ Clarke, S.\ Finbow, S.L.\ Fitzpatrick, M.E.\  Messinger, and R.J.\ Nowakowski, Seepage in directed acyclic graphs, \emph{Australasian Journal of Combinatorics} \textbf{43} (2009)  91--102.

\bibitem{diestel} R.\ Diestel, \emph{Graph theory}, Springer-Verlag, New York, 2000.

\bibitem{farley2} J.D.\ Farley, Breaking Al Qaeda cells: a mathematical analysis of counterterrorism operations (A guide for risk assessment and decision making), \emph{Studies in Conflict \& Terrorism} \textbf{26} (2003) 399--411.

\bibitem{farley1} J.D.\ Farley, \emph{Toward a Mathematical Theory of Counterterrorism}, The Proteus Monograph Series Jonathan David Farley, Stanford University, 2007.

\bibitem{gupte} M.\ Gupte, S.\ Muthukrishnan, P.\ Shankar, L.\ Iftode, and J.\ Li, Finding hierarchy in directed online social networks, In: \emph{Proceedings of WWW'2011}.

\bibitem{interdiction} A.\ Gutfraind, A.\ Hagberg, and F.\ Pan, Optimal interdiction of unreactive Markovian evaders, In: \emph{Integration of AI and OR Techniques in Constraint Programming for Combinatorial Optimization Problems}, Hoeve, Willem-Jan van; Hooker, John N. (Eds), (Springer Berlin / Heidelberg) 2009.

\bibitem{ikeda} K.\ Ikeda and S.E.\ Richey, Japanese network capital: the impact of social networks on japanese political participation, \emph{Political behavior} \textbf{27} (2005) 239--260.

\bibitem{JLR} S. Janson, T. {\L}uczak, and A. Ruci\'nski, \emph{Random Graphs}, Wiley, New York, 2000.

\bibitem{lopez} L.\ L\'opez, J.F.F. Mendes, and M.A.F.\ Sanju\'an, Hierarchical social networks and information flow, \emph{Physica A: Statistical Mechanics and its Applications} \textbf{316} (2002) 695--708.

\bibitem{twitter} Twitaholic. Accessed January 10, 2012. \texttt{http://twitaholic.com/}.

\bibitem{west} D.B.\ West, \emph{Introduction to Graph Theory, 2nd edition}, Prentice Hall, 2001.

\bibitem{wormald} N.C.\ Wormald, Models of random regular graphs, \emph{Surveys in Combinatorics}, 1999, J.D.\ Lamb and D.A. Preece, eds.\ London Mathematical Society Lecture Note Series, vol 276, pp. 239--298. Cambridge University Press, Cambridge, 1999

\end{thebibliography}
\end{document}